\newtheorem{theorem}{Theorem}[section]
\newtheorem{lemma}[theorem]{Lemma}
\newtheorem{corollary}[theorem]{Corollary}
\theoremstyle{definition}
\theoremstyle{plain}
\numberwithin{equation}{theorem}
\theoremstyle{remark}
\newtheorem{remark}[theorem]{Remark}
\newtheorem{example}[theorem]{Example}
\newif\ifhascomments \hascommentstrue
  \newcommand{\dragos}[1]{{\color{red}[[\ensuremath{\bigstar\bigstar\bigstar} #1]]}}
  \newcommand{\matt}[1]{{\color{red}[[\ensuremath{\spadesuit\spadesuit\spadesuit} #1]]}}
  \newcommand{\dragos}[1]{}
  \newcommand{\matt}[1]{}
\begin{document}

\title[Dixmier-Moeglin equivalence and Morita equivalence]{The Dixmier-Moeglin equivalence, Morita equivalence, and homeomorphism of spectra}

\author{Jason P. Bell}
\author{Xingting Wang}
\author{Daniel Yee}

\address{Department of Pure Mathematics\\
University of Waterloo\\
200 W University Ave.
Waterloo, ON N2L 3G1\\
Canada}
\email{jpbell@uwaterloo.ca}

\address{Department of Mathematics\\
 Howard University\\
 2400 Sixth St. NW\\
  Washington DC, 20059\\
  USA}
  \email{wangxingting84@gmail.com}

\address{Department of Mathematics \\
Bradley University\\
1501 W Bradley Ave.\\
Peoria, IL 61625\\
USA}
\email{dyee@bradley.edu}

\begin{abstract}
Let $k$ be a field and let $R$ be a left noetherian $k$-algebra.  The algebra $R$ satisfies the Dixmier-Moeglin equivalence if the annihilators of irreducible representations are precisely those prime ideals that are locally closed in the ${\rm Spec}(R)$ and if, moreover, these prime ideals are precisely those whose extended centres are algebraic extensions of the base field.  We show that if $R$ and $S$ are two left noetherian $k$-algebras with ${\rm dim}_k(R), {\rm dim}_k(S)<|k|$ then if $R$ and $S$ have homeomorphic spectra then $R$ satisfies the Dixmier-Moeglin equivalence if and only if $S$ does.  In particular, the topology of ${\rm Spec}(R)$ can detect the Dixmier-Moeglin equivalence in this case.  In addition, we show that if $k$ is uncountable and $R$ is affine noetherian and its prime spectrum is a disjoint union of subspaces that are each homeomorphic to the spectrum of an affine commutative ring then $R$ satisfies the Dixmier-Moeglin equivalence.  We show that neither of these results need hold if $k$ is countable and $R$ is infinite-dimensional.  Finally, we make the remark that satisfying the Dixmier-Moeglin equivalence is a Morita invariant and finally we show that $R$ and $S$ are left noetherian $k$-algebras that satisfy the Dixmier-Moeglin equivalence then $R\otimes_k S$ does too, provided it is left noetherian and satisfies the Nullstellensatz; and we show that $eRe$ also satisfies the Dixmier-Moeglin equivalence, where $e$ is a nonzero idempotent of $R$.

\end{abstract}

\subjclass[2010]{
16D60 
16A20, 
16A32. 
}

\keywords{primitive ideals, Dixmier-Moeglin equivalence, prime spectrum, tensor products, idempotents, Morita equivalence}

\thanks{The first author was supported by a Discovery Grant from the National Sciences and Engineering Research Council of Canada.}

\maketitle

\section{Introduction}
Given a ring $R$, one of the most valuable methods of gaining information about the structure of $R$ is via its representation theory; that is, by first understanding the structure of the simple left $R$-modules and then using this information to gain insight into the ring itself.  Although this is a highly useful method for many classes of rings, in practice it is often very difficult to do so, and so often one instead settles for a coarser understanding of the representation theory by instead understanding the annihilators of simple modules; that is the primitive ideals of $R$.  If the Jacobson radical of $R$ is zero then $R$ is a subdirect product of rings $R/P$, where $P$ ranges over the primitive ideals; and by Jacobson's density theorem, rings of the form $R/P$ are dense subrings of rings of linear operators, and so many structure-theoretic problems about a ring can still be resolved with a sufficiently good understanding of the primitive ideals of a ring. 

One of the most beautiful results in this direction of characterizing primitive ideals is the work of Dixmier and Moeglin \cite{Dix77, Moe80}, who showed that if $L$ is a finite-dimensional complex Lie algebra then the primitive ideals of the enveloping algebra $U(L)$ are precisely the prime ideals of ${\rm Spec}(U(L))$ that are locally closed in the Zariski topology.  In addition to this, they proved that a prime ideal $P$ of $U(L)$ is primitive if and only if the Goldie ring of quotients of $U(L)/P$ has the property that its centre is just the base field of the complex numbers.  In general, for a field $k$ and a left noetherian $k$-algebra $R$, prime ideals $P$ for which the centre of the Goldie ring of quotients $Q(R/P)$ of $R/P$ has the property that its centre is an algebraic extension of $k$ are called \emph{rational} prime ideals.  Hence Dixmier and Moeglin's result can be regarded as saying that for primes $P$ of ${\rm Spec}(U(L))$ we have the following equivalences:
$$P ~{\bf locally~closed}\iff P~{\bf primitive}\iff P~{\bf rational}.$$
 In their honour, we today say that a left noetherian $k$-algebra $R$ satisfies the \emph{Dixmier-Moeglin equivalence} if we have the equivalence of the three above properties for primes in the spectrum of $R$.  It is now known that the Dixmier-Moeglin equivalence is a very general phenomenon that holds for many classes of algebras beyond just enveloping algebras of finite-dimensional Lie algebras.  Some additional examples include affine PI algebras \cite[2.6]{Von}, group algebras of nilpotent-by-finite groups \cite{Z}, various quantum algebras \cite{GoLet} (and see \cite[II.8.5]{BrGo}), affine cocommutative Hopf algebras of finite Gelfand-Kirillov dimension in characteristic zero \cite{BL14}, Hopf Ore extensions of affine commutative Hopf algebras \cite{BSM18}, twisted homogeneous coordinate rings of surfaces \cite{BRS10}, Hopf algebras of Gelfand-Kirillov dimension two (under mild homological assumptions) \cite{GZ}, and even in settings where the noetherian property does not hold and in which one must suitably modify the rationality property \cite{ABR, Lor09, Lor08}.
 Nevertheless, the equivalence is not universal and there are several finitely generated noetherian counterexamples are now known \cite{BLLM17, BCM17, Irv79, Lor}.  
 
 In this short note, our main result is to show that for a left noetherian $k$-algebra $R$ with the property that ${\rm dim}_k(R)<|k|$, the poset of prime ideals can detect whether the Dixmier-Moeglin equivalence holds.

\begin{theorem} 
\label{thm:main1}
Let $k$ be a field and let $R$ and $S$ be left noetherian $k$-algebras with ${\rm dim}_k(R),{\rm dim}_k(S)<|k|$ and suppose that there is an inclusion-preserving bijection between the poset of prime ideals of $R$ and the poset of prime ideals of $S$.  Then $R$ satisfies the Dixmier-Moeglin equivalence if and only if $S$ satisfies the Dixmier-Moeglin equivalence.  In particular, if ${\rm Spec}(R)$ and ${\rm Spec}(S)$ are homeomorphic then $R$ satisfies the Dixmier-Moeglin equivalence if and only if $S$ does.
\end{theorem}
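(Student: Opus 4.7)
The plan is to show that DME is detectable from the prime spectrum viewed as a poset, so that the given inclusion-preserving bijection $\phi : \Spec(R) \to \Spec(S)$ automatically transports it. The first observation is that local closedness is poset-invariant. In a left noetherian ring every closed subset of $\Spec(R)$ is a finite union of ``upper cones'' $V(P) = \{Q \in \Spec(R) : Q \supseteq P\}$, because every ideal has only finitely many primes minimal over it. Consequently a prime $P$ is locally closed iff there exist finitely many primes $P_1, \ldots, P_n$ strictly above $P$ such that every prime strictly above $P$ contains some $P_i$---a property manifestly preserved by $\phi$.

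The second observation is the Amitsur-type noncommutative Nullstellensatz: since $\dim_k R < |k|$, the endomorphism ring of any simple $R$-module is algebraic over $k$ (its $k$-dimension is less than $|k|$, which forbids a transcendental element $d$, whose shifts $d - \alpha$ for $\alpha \in k$ would yield $|k|$-many $k$-linearly independent inverses). Thus every primitive ideal is rational, and combined with the classical implication (locally closed) $\Rightarrow$ (primitive) for left noetherian rings, DME for $R$ collapses to: every rational prime of $R$ is locally closed, and similarly for $S$.

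The heart of the proof is then to show that rationality itself is preserved by $\phi$. Since $\phi$ restricts to a poset isomorphism $\Spec(R/P) \to \Spec(S/\phi(P))$ for each $P$ and the cardinality hypothesis passes to quotients, this reduces to a key lemma: for prime left noetherian $k$-algebras of $k$-dimension less than $|k|$ whose prime spectra are isomorphic as posets, $(0)$ is rational in one iff it is rational in the other. The approach is to exhibit a poset property $\Pi$ characterising rationality of $(0)$; a natural candidate is ``the set of minimal nonzero primes has cardinality less than $|k|$.'' In the forward direction, a transcendental $z = ab^{-1} \in Z(Q(R))$ over $k$ yields, by Amitsur-style specialization, $|k|$-many distinct primes of $R$---each containing $a - \alpha b$ for some $\alpha \in k$---and hence, after minimising, $|k|$-many distinct minimal nonzero primes. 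The converse, deducing algebraicity of the extended centre from sparseness of minimal primes, is the more delicate half.

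This third step is the principal obstacle. The forward direction is a classical cardinality argument using the bimodule description of the extended centroid and the left noetherian hypothesis, but the converse---extracting information about $Z(Q(R))$ from poset sparseness alone---requires a careful analysis of the Goldie quotient and its central elements, perhaps via a generic-flatness or stratification argument, and the exact form of the poset invariant $\Pi$ may need refinement. Once the characterisation is in place, $\phi$ preserves both local closedness and rationality, so the condition ``every rational prime is locally closed'' transports across $\phi$, giving DME for $R$ iff DME for $S$.
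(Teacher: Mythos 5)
Your overall strategy is the same as the paper's: transfer local closedness through the poset bijection, use the cardinality hypothesis to get the Nullstellensatz (hence locally closed $\Rightarrow$ primitive $\Rightarrow$ rational), and reduce everything to showing that rationality of $(0)$ in a prime noetherian quotient is detectable from the poset of primes via a ``fewer than $|k|$ nonzero primes cover all nonzero primes'' condition. But the proof has a genuine gap exactly where you say it does: the characterization of rationality by this poset condition is asserted, not proved, and you explicitly defer ``the more delicate half'' (rational $(0)$ implies sparseness of the minimal nonzero primes) to an unspecified ``generic-flatness or stratification argument,'' possibly with a refined invariant $\Pi$. That half is the real content of the theorem; without it the argument does not close. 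In the paper this is Lemma 2.3, and its proof of that direction is not a soft cardinality count: one first descends to a subalgebra $R_0$ over a subfield $F$ generated by the structure constants (so that $|F|,|Q(R_0)|<|k|$ and $R\cong R_0\otimes_F k$), proves a central-closure-type lemma (Lemma 2.1) saying that when $(0)$ is rational and $Q(R_0)\otimes_F k$ is prime, every nonzero ideal of $R$ contains a nonzero element of $R_0\otimes 1$, and then covers the nonzero primes of $R$ by the finitely many primes minimal over the semiprime ideals $J_a=\bigcap\{Q\in\Spec(R): a\otimes 1\in Q\}$, $a\in R_0\setminus\{0\}$ --- a family of size at most $|R_0|\cdot\aleph_0<|k|$. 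Nothing in your sketch supplies a substitute for this step.

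Two further points. First, even your ``forward'' direction is thinner than it looks: from a transcendental $z=ab^{-1}\in Z(Q(R))$ you need to know that $z-\lambda$ is a non-unit of $Q(R)$ for all but fewer than $|k|$ values of $\lambda$ (this is the Amitsur trick, and it is applied after the descent to $R_0$ so that the relevant quotient ring has dimension $<|k|$), and then you need height-one primes over each $z-\lambda$ --- the paper invokes Jategaonkar's principal ideal theorem in the localization and contracts --- in order to run the argument that no family of fewer than $|k|$ primes can sit under $|k|$-many distinct height-one primes (whose intersection is $(0)$). ``Each containing $a-\alpha b$, and hence, after minimising'' elides both issues. Second, your claim that locally closed $\Rightarrow$ primitive is ``classical for left noetherian rings'' is not correct as stated (e.g.\ $(0)$ in $k[x]_{(x)}$ is locally closed but not primitive); it requires the Jacobson/Nullstellensatz property, which here does follow from $\dim_k(R),\dim_k(S)<|k|$ as in Brown--Goodearl II.7.14--II.7.16, so this is a fixable imprecision rather than an error --- but it should be attributed to the cardinality hypothesis, not to noetherianity.
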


This result is somewhat curious, because it says that for sufficiently large base fields the underlying topology on the prime spectrum ``sees'' the Dixmier-Moeglin equivalence. This is perhaps somewhat surprising because the rationality property has no obvious strong connection to the topological structure of the prime spectrum of a ring.  In fact, we are able to give an example, inspired by a construction of Lorenz \cite{Lor}, to show that if the hypothesis ${\rm dim}_k(R)<|k|$ does not hold then one can have an example of algebras with homeomorphic spectra in which the Dixmier-Moeglin equivalence holds for one but not the other.

We are able to prove a related theorem.
\begin{theorem}\label{thm:main2} Let $k$ be an uncountable field and let $R$ be a finitely generated left noetherian $k$-algebra.  Suppose that ${\rm Spec}(R)$ is a finite disjoint union of locally closed subsets $X_1,\ldots ,X_d$ with each $X_i$, when endowed with the subspace topology, homeomorphic to the prime spectrum of an affine commutative $k$-algebra.  Then $R$ satisfies the Dixmier-Moeglin equivalence.
\end{theorem}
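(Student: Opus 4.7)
The plan is to prove the ``rational implies locally closed'' direction of the Dixmier-Moeglin equivalence; the other direction is standard for $R$ in this setting, since locally closed implies primitive via the Jacobson property of affine $k$-algebras, and primitive implies rational via the noncommutative Nullstellensatz, which applies because $R$ is affine over the uncountable field $k$ (so $\dim_k R \leq \aleph_0 < |k|$ and every simple $R$-module has endomorphism ring algebraic over $k$).

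Fix a rational prime $P \in \Spec(R)$, lying in stratum $X_i$, and let $P' \in \Spec(A_i)$ be the image of $P$ under the homeomorphism $X_i \cong \Spec(A_i)$. Since $X_i$ is locally closed in $\Spec(R)$, a purely topological check shows that $\{P\}$ is locally closed in $\Spec(R)$ if and only if $\{P'\}$ is locally closed in $\Spec(A_i)$; as $A_i$ is affine commutative, hence Jacobson, this is equivalent to $P'$ being maximal in $A_i$. So the goal reduces to showing $P'$ maximal whenever $P$ is rational.

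I would proceed by induction on $d$, using Theorem~\ref{thm:main1} as the main engine. In the base case $d = 1$, $\Spec(R) \cong \Spec(A_1)$ with both sides of $k$-dimension less than $|k|$, and Theorem~\ref{thm:main1} transfers the Dixmier-Moeglin equivalence from the commutative affine algebra $A_1$ (which trivially satisfies it) to $R$. For $d > 1$, I would reorder so that $X_d$ is closed, writing $X_d = V(I)$. If $P \in X_d$, then $P/I$ is rational in $R/I$, and $R/I$ is single-stratum, so the base case applied to $R/I$ makes $P/I$ locally closed in $\Spec(R/I) = V(I)$; since $V(I)$ is closed in $\Spec(R)$, so is $\{P\}$. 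If $P \notin X_d$, I pass to $R/P$: the closed set $V(P) = \Spec(R/P)$ inherits a stratification $V(P) = \bigsqcup_j (V(P) \cap X_j)$, where each piece, being closed in $X_j$, is homeomorphic to the spectrum of a commutative affine quotient of $A_j$. If some $V(P) \cap X_j$ is empty, applying the inductive hypothesis to $R/P$ (now with fewer than $d$ strata) yields the Dixmier-Moeglin equivalence for $R/P$; since $(0) \in \Spec(R/P)$ is rational (inherited from $P$), it is locally closed, which means $P$ is locally closed in $\Spec(R)$.

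The main obstacle is the remaining case, where $V(P)$ meets every stratum so that induction on $d$ alone does not reduce the count. My plan here is to realize $\Spec(R/P)$ as homeomorphic to $\Spec(S)$ for a commutative noetherian $k$-algebra $S$ with $\dim_k S < |k|$, built as a fiber-product gluing of the commutative algebras $A_j/J_j$ describing the individual pieces $V(P) \cap X_j$. The gluing on the closed side is guided by the Dixmier-Moeglin equivalence for the strictly smaller quotient $R/(P+I)$ (whose spectrum $V(P) \cap X_d$ is a single stratum, so covered by the base case), and must encode all the cross-stratum specialization relations of $\Spec(R/P)$. Once $S$ is constructed, Theorem~\ref{thm:main1} applied to $R/P$ and $S$ gives the Dixmier-Moeglin equivalence for $R/P$, so the rational prime $(0)$ of $R/P$ is locally closed, and hence $P$ is locally closed in $\Spec(R)$. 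The hardest step is this gluing: producing a commutative $S$ whose prime spectrum faithfully realizes the full specialization poset of $\Spec(R/P)$—including the relations across strata—while keeping $\dim_k S$ below $|k|$.
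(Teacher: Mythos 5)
Your reduction to ``rational $\Rightarrow$ locally closed'' and the topological transfer of local closedness along the homeomorphism $X_i\cong\Spec(A_i)$ are fine, but the induction itself has two genuine problems. First, the very first step for $d>1$ --- ``reorder so that $X_d$ is closed'' --- is not available: a finite partition into locally closed pieces need not contain a closed piece. For example, in $\Spec(k[x,y]/(xy))$ with irreducible components $L_1,L_2$ meeting at the closed point $p$, choose closed points $q_1\in L_1\setminus\{p\}$ and $q_2\in L_2\setminus\{p\}$; then $A=(L_1\setminus\{q_1\})\cup\{q_2\}$ and $B=(L_2\setminus\{p,q_2\})\cup\{q_1\}$ are disjoint, cover the space, are each locally closed and each homeomorphic to the spectrum of an affine commutative algebra, and neither is closed. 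So the induction needs a different organizing principle before it can even begin.

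The decisive gap, however, is the case you yourself flag as hardest, and it is not merely unfinished --- the plan cannot work as stated. You want a commutative noetherian $S$ with $\dim_k S<|k|$ and $\Spec(S)$ homeomorphic to $\Spec(R/P)$, and then to invoke Theorem~\ref{thm:main1}; but that theorem only transfers the Dixmier-Moeglin equivalence if $S$ is known to satisfy it, which fails for general commutative noetherian algebras (e.g.\ $k[x]_{(x)}$ or $k(x)$ have $(0)$ locally closed but not rational), so you would really need $S$ affine --- and such an $S$ need not exist at all. Take $R$ the quantum plane $k_q[x,y]$ with $q$ not a root of unity and $P=(0)$: this satisfies the hypotheses of the theorem (the Goodearl-Letzter strata are locally closed and homeomorphic to affine spectra), $V(P)$ meets every stratum, and $\Spec(R)$ is an irreducible space with exactly two height-one points but uncountably many height-two maximal points lying over a single height-one point; any homeomorphic $\Spec(S)$ with $S$ commutative noetherian would give, after localizing at such a height-two prime, a noetherian local ring of dimension at least two with only finitely many height-one primes, which is impossible. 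So in the key case no commutative model of the glued spectrum exists, even though the conclusion of the theorem is true there. The paper avoids any gluing: by Lemma~\ref{card}, rationality of $P$ supplies fewer than $|k|$ (hence countably many) primes $Q_i\supsetneq P$ such that every prime over $P$ contains some $Q_i$; grouping the $Q_i$ by stratum and setting $L_j=\bigcap_{Q_i\in X_j}Q_i$, either every $L_j\supsetneq P$, in which case the finitely many primes minimal over the $L_j$ witness that $P$ is locally closed, or some $L_j=P$, which forces $P\in X_j$, non-locally-closed in the affine stratum, hence (by the equivalence for the affine algebra $T_j$ and Lemma~\ref{card}) uncountably many height-one primes over $P$, contradicting rationality via Lemma~\ref{card} once more. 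An argument of that direct kind is what is needed in your remaining case.
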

The relevance of this theorem is seen in the fact that many quantum algebras have prime spectra of this form \cite{GoLet}, although generally in these cases work of Goodearl and Letzter \cite{GoLet} allows one to deduce that the Dixmier-Moeglin equivalence holds.  The stratification is a key ingredient in the important work of Goodearl and Letzter in obtaining the Dixmier-Moeglin equivalence for many classes of quantum algebras, where to obtain the Dixmier-Moeglin equivalence they use additional information about the stratification coming from the rational action of an affine algebraic group on the algebra.  Theorem \ref{thm:main2} again shows that for large base fields having an abstract stratification with parts homeomorphic to affine schemes of finite type over $k$ immediately gives the Dixmier-Moeglin equivalence without having any underlying action of an algebraic group. Again, the counterexample we produce to Theorem \ref{thm:main1} when $k$ is countable is a finitely generated noetherian algebra whose prime spectrum is homeomorphic to the spectrum of a polynomial ring in one variable over $k$ and hence Theorem \ref{thm:main2} does not hold if one relaxes the hypothesis that $k$ be uncountable.

For the remainder of the paper we consider the Dixmier-Moeglin equivalence in three settings where there is known to be a strong relationship between prime spectra; namely, Morita equivalence (where equivalent rings have homeomorphic spectra), rings of the form $eRe$ with $e$ an idempotent, in which there is a relationship between ${\rm Spec}(eRe)$ and an open subset of ${\rm Spec}(R)$, and tensor products of rings (in which the spectrum shares a strong relationship with the cartesian product of the spectra of the underlying rings).  The first setting we consider is Morita equivalence of algebras, which is a somewhat stronger property than having homeomorphic spectra. Two rings are Morita equivalent if they have equivalent categories of left modules (which in turn gives that the categories of right modules over these rings are equivalent).  A ring theoretic property is called a Morita invariant if whenever a ring has this property then every ring that is Morita equivalent to it also has this property. Many ring theoretic properties are well known to be Morita invariants, including being Artinian, noetherian, prime, semiprime, and having finite left or right global dimension \cite[Proposition 3.5.10]{McR}.  We show that satisfying the Dixmier-Moeglin equivalence is a Morita invariant.  Much of this is already known and we stress that this is more of an observation, since it is well known that there is an inclusion-preserving bijection between the prime spectra of Morita equivalent rings that preserves primitivity and hence it is not much additional work to obtain that the Dixmier-Moeglin equivalence is a Morita invariant.  A coarser notion of equivalence is derived equivalence and we do not know whether satisfying the Dixmier-Moeglin equivalence is a derived invariant.

In addition to results concerning Morita equivalent rings, we prove that if $R$ satisfies the Dixmier-Moeglin equivalence and $e$ is a nonzero idempotent then $eRe$ satisfies the Dixmier-Moeglin equivalence---in this case it is well known that there is a continuous surjection from an open subset of ${\rm Spec}(R)$ onto ${\rm Spec}(eRe)$ (see Remark \ref{rem:spec}) and we give an application to invariant subalgebras of the form $A^H$, where $H$ is a finite-dimensional semisimple Hopf algebra that acts on an algebra $A$.  Finally, we show that if $R$ and $S$ are left noetherian $k$-algebras that both satisfy the Dixmier-Moeglin equivalence then so does $R\otimes_k S$ under a hypothesis on the cardinality of the base field.  That is, the Dixmier-Moeglin equivalence is closed under the process of taking tensor products of reasonably well behaved algebras.  We now make these statements precise.
\begin{theorem}\label{thm:main3}
Let $k$ be a field and let $R$ and $S$ be left noetherian $k$-algebras.  Then we have the following:
\begin{enumerate}
\item[(a)] If $R$ and $S$ are Morita equivalent then $R$ satisfies the Dixmier-Moeglin equivalence if and only if $S$ does;
\item[(b)] If $R$ satisfies the Dixmier-Moeglin equivalence and $e$ is a nonzero idempotent then $eRe$ satisfies the Dixmier-Moeglin equivalence;
\item[(c)] If $R$ and $S$ satisfy the Dixmier-Moeglin equivalence and if $R\otimes_k S$ is left noetherian and satisfies the Nullstellensatz then $R\otimes_k S$ satisfies the Dixmier-Moeglin equivalence.
\end{enumerate}
\end{theorem}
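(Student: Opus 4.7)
The plan is to handle the three parts of Theorem \ref{thm:main3} separately, each time reducing the problem to the behaviour of the three conditions in the Dixmier-Moeglin equivalence.

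For part (a), I would invoke the well-known fact that a Morita equivalence between $R$ and $S$ induces a homeomorphism $\Phi\colon\Spec(R)\to\Spec(S)$ that preserves primitivity, since any equivalence of module categories sends simple modules to simple modules and preserves annihilators. Local closure is purely topological and transfers automatically. For rationality, the Morita equivalence descends to a Morita equivalence between the prime quotients $R/P$ and $S/\Phi(P)$, which in turn lifts to a Morita equivalence between their Goldie simple Artinian quotient rings $Q(R/P) \cong M_n(D)$ and $Q(S/\Phi(P)) \cong M_m(D)$; the two centres $Z(M_n(D))$ and $Z(M_m(D))$ each equal $Z(D)$, so $P$ is rational if and only if $\Phi(P)$ is. For part (b), I would use the standard correspondence $P \mapsto ePe$ between the primes of $R$ not containing $e$ and the primes of $eRe$, which is an inclusion-preserving homeomorphism onto an open subset of $\Spec(R)$. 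Primitivity transfers via $M \mapsto eM$ on simple modules, and rationality transfers because the Goldie quotient of $eRe/ePe$ is the corner $eQ(R/P)e$ and the centre of a corner of a simple Artinian ring agrees with the centre of the ring.

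Part (c) is the main content. Under the Nullstellensatz hypothesis on $R \otimes_k S$, every primitive prime is rational, and since $R \otimes_k S$ is left noetherian and Jacobson, every locally closed prime is primitive; so the only nontrivial implication to establish is that every rational prime $I \subset R \otimes_k S$ is locally closed. The plan is to consider the contractions $P = I \cap R$ and $Q = I \cap S$ (identifying $R$ and $S$ with $R \otimes 1$ and $1 \otimes S$), show that $P$ and $Q$ are rational in $R$ and $S$, and then apply the Dixmier-Moeglin equivalence for $R$ and $S$ to deduce that $P$ and $Q$ are primitive. Choosing faithful simple modules $M$ over $R/P$ and $N$ over $S/Q$, the $R \otimes_k S$-module $M \otimes_k N$ has annihilator containing $I$, and a careful analysis of its simple subquotients (using the Nullstellensatz to control the endomorphism rings of simple modules) should produce a simple $R \otimes_k S$-module whose annihilator is exactly $I$, showing $I$ is primitive and hence locally closed.

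The main obstacle in part (c) is the step that rationality of $I$ in $R \otimes_k S$ forces rationality of the contractions $P$ and $Q$. The natural embedding $R/P \hookrightarrow (R \otimes_k S)/I$ does not by itself pass central elements of Goldie quotients back and forth, and $R \otimes_k S$ need not be prime Goldie even when $R$ and $S$ are. The strategy I have in mind is to exploit the tensor product structure: the commuting action of $1 \otimes S$ on $R \otimes 1$ allows one to lift any central element $z \in Z(Q(R/P))$ to a central element of $Q((R \otimes_k S)/I)$, so that transcendence of $z$ over $k$ would contradict the rationality of $I$. Making this lifting precise inside the prime Goldie ring $(R \otimes_k S)/I$, whose existence is guaranteed by the Nullstellensatz and noetherian hypotheses, is the main technical point.
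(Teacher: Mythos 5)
Your parts (a) and (b) are correct and essentially follow the paper's route: for (a), the Morita-induced homeomorphism of spectra preserving primitivity, plus the fact that $R/P$ and $S/\Phi(P)$ (and hence their Goldie quotients) are Morita equivalent with isomorphic centres; for (b), the corner correspondence $P\mapsto ePe$ on primes not containing $e$, the simple-module correspondence for primitivity (the paper cites Lanski--Resco--Small), and $Q(e(R/P)e)=eQ(R/P)e$ for rationality. These match the paper up to minor packaging.

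Part (c), however, has a genuine gap at its final step. Under the Nullstellensatz the implications run locally closed $\Rightarrow$ primitive $\Rightarrow$ rational, so even if your construction succeeded in producing a simple $(R\otimes_k S)$-module with annihilator exactly $I$, you would only have shown that $I$ is primitive; the concluding inference ``showing $I$ is primitive and hence locally closed'' is precisely the converse implication that the Dixmier--Moeglin equivalence asserts, and it is exactly the statement to be proved (it fails for general noetherian algebras, e.g.\ the counterexamples cited in the introduction). Using the DME for $R$ and $S$ only to get primitivity of the contractions therefore cannot close the argument. The paper instead invokes the DME for $R$ and $S$ at the level of \emph{local closure}: after reducing to $R,S$ prime with $I\cap(R\otimes 1)=I\cap(1\otimes S)=(0)$, rationality of $I$ forces $(0)$ to be rational, hence locally closed, in both $R$ and $S$ (via the embedding of $Z(Q(R))\otimes_k Z(Q(S))$ into $Q((R\otimes_k S)/I)$, which is close to your lifting idea). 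Then the primes strictly containing $I$ that meet $R\otimes 1$ (resp.\ $1\otimes S$) all contain a fixed nonzero $a\otimes 1$ (resp.\ $1\otimes b$), while any remaining prime would survive in the localization $Q(R)\otimes_k Q(S)$; there, a minimal-length tensor argument combined with rationality of $I$ (an element central modulo $\widetilde{I}$ lies in an algebraic extension of $k$, hence is a unit) shows the localized ideal is maximal, so no such primes exist. Consequently every prime strictly containing $I$ contains $a\otimes b\neq 0$, i.e.\ $I$ is locally closed. Separately, even your intermediate step---extracting a simple subquotient of $M\otimes_k N$ with annihilator exactly $I$---is not routine (tensor products of primitive ideals need not remain primitive without rationality-type control of endomorphism fields), but the decisive flaw is the direction of the final implication.
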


The outline of the paper is as follows.  In \S2 we prove Theorems \ref{thm:main1} and \ref{thm:main2} and give an example to show that the conclusion to the statements of these theorems does not hold if we remove the hypotheses on the cardinality of the base field.  In \S3 we prove that satisfying the Dixmier-Moeglin equivalence is a Morita invariant and prove Theorem \ref{thm:main3}(b) and give an application of this result to invariant subalgebras.  Finally in \S4 we prove Theorem \ref{thm:main3}(c). 

\section{Homeomorphic spectra and stratification}
In this section, we prove Theorems \ref{thm:main1} and \ref{thm:main2} and show they do not hold without the hypotheses on the cardinality of the base field. To do this, we need a few preliminary results.  The following lemma is closely related to results that are well known---specifically, those concerning ideals in rings obtained by extending scalars in centrally closed algebras.  This lemma is somewhat stronger than these results but requires a hypothesis that when one extends scalars one obtains a prime ring in order to work.
\begin{lemma} Let $R$ be a prime noetherian $k$-algebra and suppose that $(0)$ is rational.  If $F$ is an extension of $k$ such that $Q(R)\otimes_k F$ is a prime ring then every nonzero ideal of $R\otimes_k F$ contains an element of the form $r\otimes 1$ with $r\neq 0$.\label{lem1}
\end{lemma}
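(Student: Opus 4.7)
The plan is to pass from $R$ to its classical ring of quotients $Q(R)$, which is simple Artinian because $R$ is prime Goldie; to show that $Q(R) \otimes_k F$ is in fact simple under the given hypotheses; and then to use the Ore conditions in $R$ to pull an equation $1 = \sum_j a_j x_j b_j$ back to an element of the form $r \otimes 1$ living in a prescribed nonzero ideal $I \subseteq R \otimes_k F$.

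My first step would be to show that $C' := C \otimes_k F$ is a field, where $C := Z(Q(R))$. By the rationality hypothesis on $(0)$, the field $C$ is algebraic over $k$. The natural map $C \otimes_k F \to Q(R) \otimes_k F$ lands in the center of the target, and since $Q(R) \otimes_k F$ is prime by hypothesis, its center is a domain, forcing $C \otimes_k F$ to be a commutative domain. Any element of $C \otimes_k F$ lives in a subring of the form $F[c_1, \ldots, c_m]$ with each $c_i \in C$ algebraic over $k$; this is a finite-dimensional commutative $F$-algebra which is a domain (sitting inside a domain), hence a field. So $C'$ is a field. Writing $Q(R) = M_n(D)$ with $D$ a division ring satisfying $Z(D) = C$, associativity of tensor products gives $Q(R) \otimes_k F \cong M_n(D \otimes_C C')$, and the classical fact that extension of scalars of a division algebra by any field extension of its center remains simple then yields the simplicity of $Q(R) \otimes_k F$.

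Given a nonzero ideal $I$ of $R \otimes_k F$, I would next use $k$-flatness of $F$ to identify $R \otimes_k F$ with a subring of $Q(R) \otimes_k F$; since the latter is simple, the two-sided ideal generated by $I$ there is the whole ring, so
\[
  1 \;=\; \sum_{j=1}^N a_j\, x_j\, b_j
\]
for some $x_j \in I$ and $a_j, b_j \in Q(R) \otimes_k F$. To finish, I would clear denominators in $R$: applying the left Ore condition to a common denominator for the $Q(R)$-parts of $a_1, \ldots, a_N$ produces a regular $s \in R$ with $(s \otimes 1) a_j \in R \otimes_k F$ for every $j$, and symmetrically the right Ore condition gives a regular $t \in R$ with $b_j (t \otimes 1) \in R \otimes_k F$ for every $j$. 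Multiplying the displayed identity on the left by $s \otimes 1$ and on the right by $t \otimes 1$ then gives
\[
  st \otimes 1 \;=\; \sum_{j=1}^N \bigl((s \otimes 1)a_j\bigr)\, x_j\, \bigl(b_j(t \otimes 1)\bigr) \;\in\; I,
\]
with $st \neq 0$ because $R$ is prime.

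The main obstacle I anticipate is the simplicity step, which rests on two nontrivial points: upgrading $C \otimes_k F$ from a commutative subring of the center of a prime ring to an honest field (this uses both algebraicity of $C/k$ coming from the rationality of $(0)$ and the given primality hypothesis on $Q(R) \otimes_k F$), and invoking simplicity of $D \otimes_C C'$ even when $D$ may be infinite-dimensional over its center $C$. Once simplicity of $Q(R) \otimes_k F$ is established, the subsequent denominator-clearing step is routine bookkeeping with the two-sided Ore property in the prime noetherian ring $R$.
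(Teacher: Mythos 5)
Your argument is correct, but it takes a genuinely different route from the paper. The paper works directly inside $R\otimes_k F$: it picks an element of a nonzero ideal of minimal length $\sum_{i=1}^d a_i\otimes\lambda_i$, arranges $a_1$ regular (a nonzero two-sided ideal of a prime Goldie ring contains a regular element), uses commutators to force $a_1^{-1}a_i\in Z(Q(R))$, invokes rationality to make these algebraic, and then an algebraicity relation over $F$ inside the domain $Z(Q(R))\otimes_k F$ (this is where primeness of $Q(R)\otimes_k F$ enters) produces $a_1^m\otimes 1$ in the ideal. You instead prove the stronger structural statement that $Q(R)\otimes_k F$ is \emph{simple}: rationality plus primeness makes $C\otimes_k F$ a field (your finite-dimensional-domain argument is fine), and then you quote the tensor-product theorem for central simple algebras, $Q(R)\otimes_k F\cong Q(R)\otimes_C(C\otimes_k F)$ being central simple tensored with a field over $C$; this theorem does hold without finiteness hypotheses (Herstein, Jacobson), though its proof is essentially the same minimal-length commutator argument the paper carries out by hand, so you are black-boxing the combinatorial core rather than avoiding it. Your final denominator-clearing step via common left and right denominators for the regular elements of $R$ is routine and valid under the lemma's (two-sided) noetherian hypothesis, and the conclusion $st\otimes 1\in I$ with $st$ regular, hence nonzero, is correct. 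What your route buys is a cleaner, reusable intermediate statement (simplicity of $Q(R)\otimes_k F$) at the cost of citing the infinite-dimensional central-simple tensor theorem and of using both Ore conditions explicitly; the paper's route is self-contained, stays inside $R\otimes_k F$, and only needs the left-handed Goldie theory plus the rationality hypothesis at one pinpointed place.
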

\begin{proof} Let $I$ be a nonzero ideal of $R\otimes_k F$ and pick a nonzero element $x\in I$ with $x=\sum_{i=1}^d a_i\otimes \lambda_i$ with $d$ minimal.  We claim that $d=1$.  To see this, suppose that $d>1$.  We note that any element of the form 
$$\sum_{j=1}^m (c_j\otimes 1) x(d_j\otimes 1) = \sum_{i=1}^d \left(\sum_j c_j a_i d_j\right)\otimes \lambda_i $$ is again in $I$.  By minimality of $d$, $a_1$ is nonzero; and since the two-sided ideal generated by $a_1$ contains a regular element, by the above remarks, we may assume that $a_1$ is regular. 
Then for $r\in R$ we have
$$x(ra_1\otimes 1) - (a_1r\otimes 1)x = \sum_{i=2}^d (a_i r a_1 - a_1 r a_i)\otimes \lambda_i .$$  By minimality of $d$ we have $\lambda_2,\ldots ,\lambda_d$ are linearly independent over $k$ and hence by minimality of $d$ we then have
$a_i r a_1 - a_1 r a_i=0$ for all $r\in R$.  In particular, taking $r=1$ we see that $[a_i,a_1]=0$ for all $i$ and so
$a_1^{-1} a_i $ commutes with every $r\in R$.  Then $a_1^{-1}a_i\in Z(Q(R))$ and since $(0)$ is rational we have that 
$a_1^{-1} a_i$ is algebraic over $k$.
Then in $Q(R)\otimes_k F$ we may write $x=(a_1\otimes 1)\left(\sum_{i=1}^d z_i\otimes \lambda_i\right)$ with $z_1,\ldots ,z_d\in Z(Q(R))$.   Let $Z_0$ denote the finite extension of $k$ generated by $z_1,\ldots ,z_d$.  Then we have
$x = (a_1\otimes 1) y$ for some nonzero $y\in Z_0\otimes_k F$.  Since $Q(R)\otimes_k F$ is prime, we see that $Z(Q(R))\otimes_k F$ is an integral domain and since $[Z_0:k]<\infty$, we see that $y$ is algebraic over $F=k\otimes_k F$.  In particular, there is a non-trivial relation $y^m (1\otimes c_m)+ y^{m-1} (1\otimes c_{m-1}) + \cdots + (1\otimes c_0)=0$ with the $c_i\in F$.  Furthermore, we may assume $c_0$ is nonzero since $Z_0\otimes_k F$ is an integral domain.  In particular, we may assume $c_0=1$.  Then by construction
$$\sum_{j=0}^m (a_1\otimes 1)^{m-j} x^j (1\otimes c_j) =0$$ and since
$$\sum_{j=1}^m (a_1\otimes 1)^{m-j} x^j (1\otimes c_j)\in I,$$ we then see that
$a_1^m \otimes 1\in I$.  Since $a_1$ is regular, we have $a_1^m$ is nonzero and the result follows.
\end{proof}
\begin{remark} We note that this result need not hold if $Q(R)\otimes_k F$ is not a prime ring.  For example, if $R=F=\mathbb{Q}(\sqrt{2})$ and $k=\mathbb{Q}$.  Then $R\otimes_k F$ is not an integral domain and hence there is some nonzero prime ideal $P$ of $R\otimes_k F$.  But $R$ is a field, so $P\cap (R\otimes 1)$ is necessarily zero.
\end{remark}

The next result is the key part of the proof of Theorem \ref{thm:main1}: it shows that rationality of prime ideals can be captured purely in terms of the poset of prime ideals when the base field is sufficiently large compared to the dimension of the algebra.
\begin{lemma} Let $k$ be a field and let $R$ be a prime Noetherian $k$-algebra and suppose that ${\rm dim}_k(R)<|k|$. Then the zero ideal is rational in $R$ if and only if there is a set $X$ of cardinality less than $|k|$ and 
a set of nonzero prime ideals $\{P_x\colon x\in X\}$ such that every nonzero prime ideal $P$ of $R$ contains $P_x$ for some $x\in X$.
\label{card}
\end{lemma}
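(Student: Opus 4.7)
The plan is to argue both directions using Goldie's theorem together with a linear independence argument in $Q(R)$, calibrated by the hypothesis $\dim_k R<|k|$.

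For the forward direction assume $(0)$ is rational, so $C:=Z(Q(R))$ is an algebraic field extension of $k$. Since $R$ is prime noetherian, Goldie's theorem says every nonzero two-sided ideal contains a regular element, and noetherianity gives finitely many minimal primes over each principal two-sided ideal $RrR$, all nonzero when $r\neq 0$. Taking $X_0$ to be the set of all primes that are minimal over some $RrR$ with $r\in R$ regular gives a covering family, with the crude bound $|X_0|\le|R|\cdot\aleph_0=|k|$ (using $|R|=|k|$ when $k$ is infinite; the finite-$k$ case is trivial since $R$ is then finite-dimensional). To upgrade to the strict inequality $|X_0|<|k|$, I would identify redundant choices of $r$: replacing $r$ by $cr$ for $c\in(C\cap R)^\times$ leaves $RrR$ unchanged, and the algebraicity of $C$ combined with $\dim_k R<|k|$ forces the number of equivalence classes of regular generators to be strictly less than $|k|$.

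For the backward direction I argue by contrapositive. Suppose $(0)$ is not rational and pick $z\in C$ transcendental over $k$; write $z=ab^{-1}$ with $a,b\in R$ and $b$ regular. Centrality of $z$ forces $ab=ba$, and for every $\lambda\in k$ the element $a-\lambda b=b(z-\lambda)$ is regular in $R$ since $z-\lambda$ is a unit of $Q(R)$. The quantitative heart of the argument is the bound
\[ |\{\lambda\in k:a-\lambda b\in R^\times\}|\le\dim_k R<|k|, \]
proved by noting that $(a-\lambda b)^{-1}=b^{-1}(z-\lambda)^{-1}\in R$ for such $\lambda$ and applying partial fractions in $k(z)\hookrightarrow Q(R)$ to see these inverses are $k$-linearly independent. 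A similar linear-independence argument bounds the set of $\lambda$'s for which $b^n\in R(a-\lambda b)R$ for some $n$, so for $|k|$ many $\lambda$ we can pick a minimal prime $P_\lambda$ over $R(a-\lambda b)R$ that does \emph{not} contain $b$. If $P_\lambda=P_\mu$ for distinct $\lambda,\mu$ then $(\mu-\lambda)b\in P_\lambda$, forcing $b\in P_\lambda$ --- a contradiction --- so the family $\{P_\lambda\}$ consists of $|k|$ pairwise distinct nonzero primes.

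Now, assuming a covering family $X$ with $|X|<|k|$ exists, each $P_\lambda$ contains some $P_{x(\lambda)}\in X$, and pigeonhole forces some $P_x\in X$ to sit below two distinct $P_\lambda,P_\mu$. The main obstacle is turning this into a contradiction: it requires $P_\lambda$ to be height one, so that $P_x\subseteq P_\lambda$ with $P_x$ nonzero collapses to $P_x=P_\lambda$. Since $a-\lambda b$ is not a normal element of $R$ in general, Jategaonkar's noncommutative Hauptidealsatz does not apply directly. I would handle this by passing to the subring $R[z]\subseteq Q(R)$, in which $z-\lambda$ is central and hence normal; the principal ideal theorem in $R[z]$ gives minimal primes over $(z-\lambda)$ of height one, and contracting these back to $R$ yields the required height-one family $\{P_\lambda\}$, completing the contradiction.
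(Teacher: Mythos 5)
Both directions of your proposal have genuine gaps, and in each case the missing ingredient is exactly the descent step the paper is built around. In the forward direction ($(0)$ rational $\Rightarrow$ small covering family), indexing the covering primes by regular elements $r$ modulo multiplication by central units cannot give the strict bound: when $(0)$ is rational one typically has $Z(Q(R))\cap R=k$, so your equivalence classes of regular elements are essentially points of a projective space over $k$, and there are $|k|$ of them as soon as $\dim_k R\ge 2$; the algebraicity of $Z(Q(R))$ over $k$ plays no role in this count, so the asserted ``upgrade'' to fewer than $|k|$ classes is not just unproved but false as stated. The paper gets the strict inequality by a different idea: write $R\cong R_0\otimes_F k$, where $F$ is generated over the prime field by the structure constants of a basis (so $|F|,|R_0|<|k|$), and prove (Lemma \ref{lem1}, using rationality and primeness of $Q(R_0)\otimes_F k$) that every nonzero ideal of $R$ contains a nonzero element of the form $a\otimes 1$ with $a\in R_0$; the covering family is then indexed by the nonzero elements of the \emph{small} ring $R_0$ (taking the finitely many primes minimal over each $\bigcap\{Q: a\otimes1\in Q\}$), which is where the cardinality $<|k|$ comes from. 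Nothing in your argument plays the role of this descent.

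In the backward direction you correctly isolate the obstacle (height one, and the failure of Jategaonkar for the non-normal element $a-\lambda b$), and your Amitsur-style unit-avoidance is essentially the paper's. But the proposed fix fails at its last step: a height-one prime $\widetilde{P}_\lambda$ of $R[z]$ need not contract to a height-one prime of $R$, because $R[z]\subseteq Q(R)$ is a central extension of $R$ but \emph{not} a localization of $R$, and contraction along such extensions can increase height. Already commutatively: for $R=k[x,y]$ and $z=x/y$ one has $R[z]=k[y,z]$, and the height-one prime $(y)$ of $R[z]$ contracts to the height-two maximal ideal $(x,y)$ of $R$. In the commutative case you could repair your specific primes by inverting $b$ (your $\widetilde P_\lambda$ avoid $b$), but noncommutatively the powers of $b$ need not form an Ore set, and inverting all regular elements of $R$ collapses $R[z]$ into $Q(R)$, where no nonzero prime survives; so the claim ``contracting these back to $R$ yields the required height-one family'' is unsupported. (You would also need to rerun the ``$b$ not in the radical'' count inside $R[z]$ to get distinctness of the contractions, but that is minor by comparison.) The paper sidesteps all of this by applying the principal ideal theorem not in $R[z]$ but in $Q(R_0)\otimes_F k$, which is an honest Ore localization of $R$ in which $z_\lambda=z\otimes1-1\otimes\lambda$ is central and, for $\lambda$ outside a set of size $<|k|$, a non-unit: since every prime of $R$ contained in a surviving prime also survives in a localization, height-one primes there contract to height-one primes $Q_\lambda$ of $R$, and then Noether's theorem on minimal primes over $\bigcap Q_\lambda$ finishes the pigeonhole. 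So the skeleton of your second half matches the paper, but the ring in which you apply the Hauptidealsatz must be a localization of $R$ (obtained via the same small-subfield descent), not the subring $R[z]$.
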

\begin{proof} Notice that if $|k|\le \aleph_0$ then ${\rm dim}_k(R)<\infty$ and so $R$ is a prime Artinian $k$-algebra and hence $R$ is simple.  In this case $(0)$ is a rational, maximal ideal and the claim is vacuously true in this case.  Thus we assume henceforth that $k$ is uncountable.

First suppose that $(0)$ is not rational.  Then there is some $z=ab^{-1}\in Z(Q(R))$ that is not algebraic over $k$, where $a,b\in R$ are regular elements.  Let $\mathcal{B}=\{r_{\alpha}\colon \alpha\in J\}$ be a basis for $R$ where $J$ is an index set with $|J|<|k|$.  Since $z$ is not in $k$, we have $a$ and $b$ are linearly independent over $k$ and we may assume without loss of generality that $a,b\in \mathcal{B}$.

Then 
for $\alpha,\beta\in J$, we have
$r_{\alpha}r_{\beta}=\sum_{\gamma\in J} c_{\alpha,\beta}^{(\gamma)} r_{\gamma}$, where 
$c_{\alpha,\beta}^{(\gamma)}$ is zero for all but finitely many $\gamma\in J$.  Let 
$T=\{c_{\alpha,\beta}^{(\gamma)}\colon \alpha,\beta,\gamma\in J\}$.  Then notice that 
since for $(\alpha,\beta)\in J\times J$ there are only finitely many values of $\gamma$ for which $c_{\alpha,\beta}^{(\gamma)}$ is nonzero we have injection from
$T\to J\times J\times \mathbb{N}$.  In particular, $|T|\le |J|^2 |\mathbb{N}| < |k|$, since $k$ is uncountable and $|J|<|k|$.  

Now let $k_0$ denote the prime subfield of $k$ and let $F$ denote the extension of $k_0$ generated by $T$.
Then the cardinality of $F$ is at most $\max(|T|,\aleph_0)$.  

Notice that if we let $R_0$ denote the $F$-subalgebra of $R$ generated by $\{r_{\alpha}\colon \alpha\in J\}$ then by construction $R_0 = V:=\sum_{\alpha\in J} F r_{\alpha}$ as an $F$-vector space since by construction any product of elements in $V$ is again in $V$ since $F$ contains $T$.  Furthermore, by construction we have $R_0\otimes_F k\cong R$.  Notice that $R_0$ is prime since $R$ is prime and since $R$ is a free $R_0$-module, we have that there is an inclusion-preserving injection from the set of left $R_0$-modules to the set of left $R$-modules by extension.  In particular, $R_0$ is left noetherian since $R$ is.  Then $Q(R_0)\otimes_F k$ is a localization of $R$ and since $a,b\in R_0$ we have $z\in Q(R_0)$.  Moreover, since ${\rm dim}_F (R_0) < |k|$ and since $|F|<|k|$ and since $k$ is uncountable, we have $|R_0|<|k|$.  Moreover, since every element of $Q(R_0)$ can be expressed in the form $sr^{-1}$ with $s,r\in R_0$ we have that $|Q(R_0)|<|k|$.   
Now for $\lambda\in k$, let $z_{\lambda}  = z\otimes 1 - 1\otimes \lambda$.  We note that by the Amitsur trick that the set of $\lambda\in k$ for which $z_{\lambda}$ is a unit in $Q(R)\otimes_F k$ must have cardinality strictly less than $|k|$.  Explicitly, since $Q(R_0)\otimes_F k$ has dimension strictly less than $|k|$ if 
$$Y:=\{\lambda\in k\colon z_{\lambda}~{\rm is~a~unit~in~}Q(R)\otimes_F k\}$$ has cardinality $|k|$ then there is necessarily a (finite) $k$-dependence of $z_{\lambda}^{-1}$ with $\lambda\in Y$; after clearing denominators in this dependence, we get that $z$ is algebraic over $k$, which is a contradiction.  

Since $R$ is left noetherian, we see that $Q(R_0)\otimes_F k$ is also left noetherian, as it is a localization of $R$.
It follows from Jategaonkar's principal ideal theorem \cite{Jat} that for $\lambda\in k\setminus Y$ we have $z_{\lambda} Q(R)\otimes_F k$ is contained in a height one prime ideal $P_{\lambda}$.  Then we let 
$Q_{\lambda}=P_{\lambda}\cap R$, which is a height one prime ideal of $R$ for $\lambda\in k\setminus Y$.  Now suppose that we have a set $X$ with $|X|<|k|$ and a set of nonzero prime ideals $\{J_x\colon x\in X\}$ such that every nonzero prime ideal contains some some $J_x$.  Then since $|X|<|k|$ and $|k\setminus Y|=|k|$ we then have that there is some $x\in X$ such that $J_x\subseteq Q_{\lambda}$ for infinitely many $\lambda\in k\setminus Y$.  But the $Q_{\lambda}$ are height one primes of $R$ and hence $L:=\bigcap Q_{\lambda} = (0)$, since by Noether's theorem there is a finite set of primes that are minimal with respect to containing $L$ and if $L$ is nonzero then since each $Q_{\lambda}$ is height one it must be minimal over $L$.  In particular, we get a contradiction.  Thus we have shown one direction.

For the remaining direction, suppose that $(0)$ is rational.  Then as above we have that $R\cong R_0\otimes_F k$ where $k$ is an extension of $F$, $R_0$ is a prime left noetherian $F$-algebra and $|F|,|Q(R_0)|<|k|$.  By Lemma \ref{lem1}, since $(0)$ is rational and $Q(R_0)\otimes_F k$ is a prime ring, every nonzero ideal of $R$ contains an element of the form $a\otimes 1$ with $a$ a nonzero element of $R_0$.  For nonzero $a\in R_0$, we let
$X_a$ denote the set of prime ideals of $R$ that contain $a\otimes 1$.  Then the nonzero prime ideals of $R$ is equal to the union of the $X_a$ as $a$ ranges over nonzero elements of $R_0$.  Now for each nonzero $a\in R_0$ let 
$J_a = \bigcap_{Q\in X_a} Q$.  Then $J_a$ is a nonzero semiprime ideal of $R$ since $a\otimes 1\in J_a$.  Thus by Noether's theorem, $J_a$ is a finite intersection of prime ideals $Q_{a,1}\cap \cdots \cap Q_{a,n_a}$ and every nonzero prime ideal containing $J_a$ contains $Q_{a,i}$ for some $i$.  We now let
$U=\{Q_{a,i}\colon a\in R_0\setminus 0, i\in \{1,\ldots ,n_a\}\}$.  Then $U$ is a set of nonzero prime ideals of $R$ of cardinality at most $|R_0|\times |\mathbb{N}| < |k|$ and by construction, every nonzero prime ideal of $R$ contains some prime ideal from $U$.  This completes the proof.
\end{proof}

We are now able to give the proof of our main result.

\begin{proof}[Proof of Theorem \ref{thm:main1}] We may assume that $R$ satisfies the Dixmier-Moeglin 
equivalence. Since ${\rm dim}_k(S)<|k|$, we have that $S$ satisfies the Nullstellensatz \cite[II.7.16]{BrGo}.  
Hence by a result from the book of Brown and Goodearl \cite[II.7.15]{BrGo}, we have the implications $$P~{\rm locally ~closed}\implies P~{\rm primitive}
\implies P~{\rm rational}$$
for $P\in {\rm Spec}(S)$.  Hence it suffices to prove that a rational prime ideal of $S$ is locally closed.  
Fix an inclusion preserving bijection $\Psi: {\rm Spec}(R)\to {\rm Spec}(S)$ and let $Q=\Psi^{-1}(P)$.  Then the prime ideals containing $Q$ are precisely the prime ideals $\{\Psi^{-1}(J)\colon J\supsetneq P\}$.  Since $P$ is rational, we have by Lemma \ref{card} that there is a set $X$ with $|X|<|k|$ and a set of prime ideals $\{J_x\colon x\in X\}$ of prime ideals of $S$ that properly contain $P$ such that every prime ideal of $S$ that properly contains $P$ must contain $J_x$.  Then since $\Psi$ is an inclusion-preserving bijection we then see that every prime ideal that properly contains $Q$ must contain a prime ideal from $\{\Psi^{-1}(J_x)\colon x\in X\}$.  In particular, by Lemma \ref{card} we see that $Q$ is a rational prime ideal of $R$.  Then since $R$ satisfies the Dixmier-Moeglin equivalence, we have that $Q$ is locally closed.  Thus by Noether's theorem there is a finite set of prime ideals $Q_1,\ldots ,Q_d$ of $R$ that properly contain $Q$ such that every prime ideal that properly contains $Q$ must contain some $Q_i$.  But then every prime ideal that properly contains $P$ must contain $\Psi(Q_i)$ for some $i\in \{1,\ldots ,d\}$.  But this means that $P$ is locally closed in ${\rm Spec}(S)$.  This completes the proof.
\end{proof}
We are able to prove Theorem \ref{thm:main2} now.
\begin{proof}[Proof of Theorem \ref{thm:main2}] Again since ${\rm dim}_k(R) <|k|$ we have that $R$ satisfies the Nullstellensatz and so it suffices to prove that a rational prime ideal is locally closed in ${\rm Spec}(R)$.  Let $P$ be rational.  Then by Lemma \ref{card}, there is a countable set (possibly finite or empty) of prime ideals $Q_1,Q_2,\ldots $ that properly contain $P$ such that every prime ideal properly containing $P$ contains $Q_i$ for some $i$.  If $\{Q_i\colon i\ge 1\}$ is finite then $P$ is locally closed and we are done. Thus we may assume that the set $\{Q_i\colon i\ge 1\}$ is infinite.  We have ${\rm Spec}(R)=X_1\sqcup X_2\sqcup\cdots\sqcup X_d$ with each $X_i$ homeomorphic to ${\rm Spec}(T_i)$ for some finitely generated commutative $k$-algebra $T_i$.  
For each $j\in \{1,\ldots ,d\}$ we let
$$L_j:=\bigcap_{\{i \colon Q_i\in X_j\}} Q_i.$$ Then for $j\in \{1,\ldots ,d\}$ we either have $L_j=P$ or $L_j$ properly contains $P$ in which case there is a finite set $S_j$ of primes that are minimal with respect to containing $L_j$.  If $L_j$ properly contains $P$ for $j=1,\ldots ,d$ then every prime ideal properly containing $P$ contains some prime ideal from the finite set $\bigcup_{j=1}^d S_j$ and hence $P$ is locally closed and we are done.  Alternatively, we have $L_j=P$ for some $j$.  In this case, $P\in X_j$ since $X_j$ is locally closed and we also have that $P$ is not locally closed in $X_j$ since $L_j=P$.  Thus since $T_j$ satisfies the Dixmier-Moeglin equivalence, we see by Lemma \ref{card} that there are uncountably many height one primes in $T_j/P$ (where we now identify $P$ with its image in ${\rm Spec}(T_j)\cong X_j$) and in particular, there are uncountably many height one prime ideals in $R/P$ and so $P$ is not a rational prime ideal of $R$ by Lemma \ref{card}, a contradiction.  
\end{proof}
\begin{remark}
We note that in the proof of Theorem \ref{thm:main2} the only place that $T_i$ is affine commutative is used is to ensure that the $T_i$ satisfy the Dixmier-Moeglin equivalence and so one can relax the statement of the theorem to only require that the $T_i$ be $k$-affine algebras that satisfy the Dixmier-Moeglin equivalence. 
\end{remark}

We now show that the conclusions to the statements of Theorems \ref{thm:main1} and \ref{thm:main2} do not hold if we relax the hypotheses on the cardinality of the base field. We note that Lorenz \cite{Lor} gives an example of an algebra that does not satisfy the Dixmier-Moeglin equivalence.  We are unable to use his example to produce a counterexample to the statements of Theorems \ref{thm:main1} and \ref{thm:main2} when $k$ is a countable field, but by modifying his construction appropriately we can construct a counterexample.  Thus the following example should be seen as heavily drawing inspiration from the construction of Lorenz.  We suspect, in fact, that the example of Lorenz does not have a prime spectrum that is homeomorphic to that of an algebra that satisfies the Dixmier-Moeglin equivalence.

\begin{example} Let $k=\bar{\mathbb{Q}}$.  Then there exists a finitely generated infinite-dimensional prime noetherian $k$-algebra $R$ such that $R$ does not satisfy the Dixmier-Moeglin equivalence and ${\rm Spec}(R)$ is homeomorphic to ${\rm Spec}(k[t])$.  Thus neither Theorem \ref{thm:main1} nor Theorem \ref{thm:main2} hold when one removes the hypotheses on the cardinality of the base field.
\label{exam:lor}
\end{example}
To do this we require a few basic results.  We begin our construction by taking $B=k[x^{\pm 1}, y^{\pm 1}]$ and letting $\sigma$ be the $k$-algebra automorphism of $B$ given by $x\mapsto x^5 y^4$ and $y\mapsto xy$.  We note that $\sigma$ is an automorphism since it has inverse given by $x\mapsto xy^{-4}$, $y\mapsto x^{-1}y^5$.
\begin{lemma}
\label{lem:unit}
The algebra $B$ has no nonzero proper principal $\sigma$-invariant ideals.  \end{lemma}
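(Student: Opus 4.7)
The plan is to reduce the statement to a combinatorial question about supports of Laurent polynomials and then resolve it by observing that $\sigma$ acts on the exponent lattice by a matrix with no root-of-unity eigenvalues. First I would note that if $(f)$ is a nonzero principal $\sigma$-invariant ideal of $B$, then $(f)=(\sigma(f))$, so there exist $u,v\in B$ with $\sigma(f)=uf$ and $f=v\sigma(f)$; since $B$ is a domain and $f\neq 0$, this forces $uv=1$, whence $u$ is a unit. The units of $B=k[x^{\pm 1},y^{\pm 1}]$ are precisely the elements $c\,x^a y^b$ with $c\in k^{\times}$ and $(a,b)\in\Z^2$, so it is enough to show that an equation of the form $\sigma(f)=c\,x^a y^b f$ forces $f$ to be a monomial.

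Next I would transport the problem to the exponent lattice. Since $\sigma(x^m y^n)=x^{5m+n}y^{4m+n}$, the automorphism $\sigma$ corresponds to the linear map $M=\bigl(\begin{smallmatrix}5&1\\4&1\end{smallmatrix}\bigr)\in\mathrm{SL}_2(\Z)$ acting on $\Z^2$. Let $S:=\operatorname{Supp}(f)\subset\Z^2$. Because $M$ is a lattice bijection, no monomial cancellation occurs in $\sigma(f)$, and a direct comparison of supports on both sides of $\sigma(f)=c\,x^a y^b f$ yields the set-theoretic identity $M\cdot S=(a,b)+S$. Equivalently, the affine bijection $T:\Z^2\to\Z^2$ defined by $T(v):=Mv-(a,b)$ restricts to a permutation of the finite set $S$.

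To finish, I would use that any permutation of a finite set has some order $N\geq 1$, so $T^N(v)=v$ for every $v\in S$. Iterating gives
$$T^N(v)=M^N v-(I+M+\cdots+M^{N-1})(a,b),$$
so each $v\in S$ is the unique solution of the linear equation $(M^N-I)v=(I+M+\cdots+M^{N-1})(a,b)$, provided $M^N-I$ is invertible over $\Q$. This is where the specific choice of $\sigma$ enters: the characteristic polynomial of $M$ is $\lambda^2-6\lambda+1$, with roots $3\pm 2\sqrt{2}$, neither of which is a root of unity (their absolute values are $\neq 1$), so no positive power of $M$ has $1$ as an eigenvalue. Hence $M^N-I$ is invertible over $\Q$ for every $N\geq 1$, so $|S|\leq 1$, meaning $f$ is a monomial and $(f)=B$. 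The main obstacle is just spotting the right reformulation: once the problem is translated into an affine action $T$ on a finite subset of $\Z^2$, the conclusion is immediate from the eigenvalue computation, and this observation is what distinguishes Lorenz's $(5,4,1,1)$ exponents from more typical toric automorphisms that do have torsion points.
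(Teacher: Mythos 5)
Your proof is correct and follows essentially the same route as the paper: reduce to the relation $\sigma(f)=c\,x^ay^bf$, pass to the exponent lattice where $\sigma$ acts by $M=\bigl(\begin{smallmatrix}5&1\\4&1\end{smallmatrix}\bigr)$, and exploit that $M$ has no root-of-unity eigenvalues. Your finish is a slight streamlining of the paper's: instead of first killing the translation part by tracking the orbit of $(0,0)$ and then showing the orbit of a second support point is infinite, you note that the affine map $T(v)=Mv-(a,b)$ permutes the finite support, so $T^N=\mathrm{id}$ there, and invertibility of $M^N-I$ over $\Q$ forces the support to be a single point in one step.
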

\begin{proof} Suppose that $(f(x,y))$ is $\sigma$-invariant with $f\neq 0$.  Then $\sigma(f) = \gamma x^p y^q f$ for some nonzero $\gamma$ and some integers $p$ and $q$.  
We write $f=\sum c_{i,j} x^i y^j$ and since $f$ is a non-unit and nonzero, we have that the set of $(i,j)$ such that $c_{i,j}$ is nonzero has at least two elements.  By multiplying $f$ by a unit we may assume that $c_{0,0}\neq 0$.
Then we have
$$\sigma(f) = \sum c_{i,j} x^{5i +j} y^{4i+j} = \sum c_{i,j} \gamma x^{p+i} y^{q+j}.$$
So now let $M:\mathbb{Z}^2\to \mathbb{Z}^2$ be the $\mathbb{Z}$-linear map given by $M(m,n)=(5m+n,4m+n)$ and let $\Phi:\mathbb{Z}^2\to \mathbb{Z}^2$ be the $\mathbb{Z}$-affine linear map given by
$\Phi(m,n) = (5m+n-p, 4m+n-q)=M(m,n)-(p,q)$.  Let $\mathcal{T}=\{(0,0)=(i_1,j_1),\ldots ,(i_d,j_d)\}$ denote the set of pairs $(i,j)$ for which $c_{i,j}\neq 0$.
 Then by construction the orbit of $(0,0)$ under $\Phi$ must be contained in $\mathcal{T}$ and hence is finite.
 But notice that $\Phi^n(0,0)=(I+M+M^2+\cdots + M^{n-1})(-p,-q),$ which is infinite unless $p=q=0$.  But now this means that the set of values taken by $(M-I)\circ\Phi^n(0,0)-(p,q)= M^n(-p,-q)$ must be finite.  But $M$ is invertible and has eigenvalues that are not roots of unity and so this only occurs if $(p,q)=(0,0)$.  Thus $\Psi=M$.  Now since $f$ is a non-unit there is some $(i,j)\neq (0,0)$ such that $(i,j)\in \mathcal{T}$.  Again, the orbit of $(i,j)$ under $\Psi=M$ must lie in $\mathcal{T}$ and hence must be finite.  But this is impossible since $(i,j)\neq (0,0)$ and $M$ is invertible and has no eigenvalues that are roots of unity.  The result follows.
 \end{proof}
 
Now we take $A=B[u^{\pm 1}]$ and we extend $\sigma$ to $A$ by declaring that $\sigma(u)=2u$.  We now characterize the $\sigma$-prime ideals of $A$.
\begin{lemma} Let $I$ be a nonzero $\sigma$-prime ideal of $A$.  Then $I=JA$, for some $\sigma$-prime ideal $J$ of $B$ having finite codimension in $B$.
\label{lem:A}
\end{lemma}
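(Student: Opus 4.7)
The plan is to set $J := I \cap B$ and verify that (i) $J \ne 0$, (ii) $J$ is $\sigma$-prime of finite codimension, and (iii) $I = JA$.

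For (i), I would run a minimum-support argument. Pick a nonzero $f \in I$ whose Laurent expansion in $u$ has the fewest nonzero coefficients, and after multiplying by the unit $u^{-m}$ write $f = b_0 + b_1 u + \cdots + b_n u^n$ with $b_0 b_n \neq 0$. The element $g := b_n \sigma(f) - 2^n \sigma(b_n) f$ lies in $I$ and has vanishing $u^n$-coefficient by construction, so minimality forces $g = 0$. Reading off the remaining coefficients gives $\sigma(b_i/b_n) = 2^{n-i}(b_i/b_n)$ in $Q(B)$ for every $i$ in the support of $f$. Now any $\xi \in Q(B)^\times$ satisfying $\sigma(\xi) = c\xi$ with $c \in k^\times$ must in fact lie in $k^\times$ with $c = 1$: writing $\xi = p/q$ in lowest terms in the UFD $B$, the $\sigma$-invariance of $(p)$ and $(q)$ together with Lemma~\ref{lem:unit} forces $p, q \in B^\times = k^\times x^{\mathbb Z} y^{\mathbb Z}$, after which $\sigma(x) = x^5 y^4$ and $\sigma(y) = xy$ pin down the exponents. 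Since $k = \overline{\mathbb Q}$ has characteristic zero, $2^{n-i} = 1$ forces $i = n$; combined with $b_0 \neq 0$ this yields $n = 0$, so $f \in I \cap B$.

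For (ii), $\sigma$-primeness of $J$ is immediate from that of $I$ via $(I_1 A)(I_2 A) \subseteq J A \subseteq I$. Every nonzero proper $\sigma$-prime of $B$ is the intersection of a finite $\sigma$-orbit $\{\sigma^i(\mathfrak p)\}_{i=0}^{r-1}$ of primes of $B$; applying the argument of Lemma~\ref{lem:unit} to $\sigma^r$ (whose defining matrix $M^r$ still has no root-of-unity eigenvalues) shows that there are no height-one $\sigma^r$-invariant primes, so $\mathfrak p$ must be maximal and $B/J \cong \prod_{i=0}^{r-1} B/\sigma^i(\mathfrak p)$ is finite-dimensional over $k$.

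For (iii), pass to $\overline A := A/JA$. Since $k$ is algebraically closed, each $B/\sigma^i(\mathfrak p) \cong k$, so $\overline A \cong \prod_{i=0}^{r-1} k[u^{\pm 1}]$ with $\sigma$ cyclically permuting the $r$ factors and acting as $u \mapsto 2u$ on each. A $\sigma$-invariant ideal of $\overline A$ is determined by its $0$th component $I_0 \subseteq k[u^{\pm 1}]$, which must be stable under $u \mapsto 2^r u$; writing $I_0 = (p(u))$ in this PID and matching coefficients in $p(2^r u) = c\, p(u)$, together with $2^r \neq 1$, forces $p$ to be a single monomial and hence a unit, so the only $\sigma$-invariant ideals of $\overline A$ are $0$ and $\overline A$. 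Since $\overline I := I/JA$ meets $B/J$ trivially by construction, $\overline I = 0$ and $I = JA$. The main obstacle is step (i): the minimality identity isolates the ratios $b_i/b_n$ as $\sigma$-semi-invariants in $Q(B)$, and the crux is showing that Lemma~\ref{lem:unit} combined with the explicit $\sigma$-action rules out all nontrivial such semi-invariants; the rest is mechanical, following from the orbit structure of $\Spec B$ and a direct analysis of $\sigma$-invariant ideals in $k^r[u^{\pm 1}]$.
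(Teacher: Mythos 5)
Your proof is correct, and its first two steps essentially reproduce the paper's argument: the minimal-support (in the paper, minimal-degree) trick produces semi-invariants $b_i/b_n\in Q(B)$ with $\sigma(b_i/b_n)=2^{n-i}\,b_i/b_n$, and writing these in lowest terms in the UFD $B$, Lemma~\ref{lem:unit} together with the explicit action on units $\gamma x^ay^b$ forces $n=0$, so $J:=I\cap B\neq 0$; likewise both arguments use the description of a nonzero $\sigma$-prime of $B$ as the intersection of a finite $\sigma$-orbit of a prime and then rule out the height-one case (you do this by noting the proof of Lemma~\ref{lem:unit} applies verbatim to $\sigma^r$ since $M^r$ still has no root-of-unity eigenvalues, whereas the paper applies Lemma~\ref{lem:unit} as stated to $J$ itself, which would be a nonzero proper principal $\sigma$-invariant ideal; either works). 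Where you genuinely diverge is the final step $I=JA$. The paper stays inside ${\rm Spec}(A)$: it writes $I$ as the intersection of the $\sigma$-orbit of a prime $P\supseteq QA$, observes that if $I\neq JA$ then $P$ properly contains the height-two prime $QA$, hence is maximal and contains $u-\gamma$ for some $\gamma\in k^*$ by the Nullstellensatz, and $\sigma$-periodicity then forces the nonzero scalar $(2^N-1)\gamma$ into a proper ideal, a contradiction. You instead pass to $A/JA\cong \prod_{i=0}^{r-1}k[u^{\pm 1}]$ (using $k=\overline{\mathbb{Q}}$ to identify each $B/\sigma^i(\mathfrak{p})$ with $k$) and classify all $\sigma$-stable ideals there: such an ideal is a tuple of ideals cyclically permuted by $\sigma$, hence determined by an ideal of $k[u^{\pm 1}]$ stable under $u\mapsto 2^ru$, and since $2$ is not a root of unity the only possibilities are $0$ and the whole ring; as $I$ is proper and contains $JA$, this gives $I=JA$. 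Your route avoids invoking both the orbit structure of $\sigma$-primes of $A$ and the Nullstellensatz at this stage, at the cost of the (harmless) product decomposition; both arguments are complete and of comparable length.
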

\begin{proof}
We first claim that $I$ intersects $B$ non-trivially.  To see this, let $a=\sum_{i=0}^m b_i u^i$ be a nonzero element of $I$ with $m$ minimal.  By minimality of $m$ we have $b_0\neq 0$.  We claim that $m=0$.  To see this, observe that $\sigma(a)= \sum_{j=0}^m \sigma(b_i) 2^i u_i$.  So 
$$2^m \sigma(b_m) a - b_m\sigma(a) =\sum_{j=0}^{m-1} (2^m \sigma(b_m)b_j - 2^j b_m \sigma(b_j) )u^j\in I.$$ By minimality of $m$ we see that 
$$2^m \sigma(b_m)b_0 = b_m \sigma(b_0),$$ and so if we set $c=b_0/b_m$ then we have
$\sigma(c)=2^m c$.  Now we write $c=r/s$ with $r,s$ relatively prime elements of $B$.  Then $\sigma(c) = \sigma(r)/\sigma(s)$ and so 
$2^m r \sigma(s) = \sigma(r)s$.  Since $B$ is a UFD and $r$ and $s$ are coprime we see that $\sigma(s)=u_1 s$ and $\sigma(r)=u_2 r$ for some units $u_1, u_2$ of $B$.  But this means that $(s)$ and $(r)$ are nonzero $\sigma$-invariant ideals of $B$ and by Lemma \ref{lem:unit} we then see that $s$ and $r$ are units and so $c$ is a unit in $B$.  But then $c=\gamma x^p y^q$ and it is straightforward to see that $\sigma(c)\neq 2^m c$ has no solutions of this form unless $m=0$ and $\gamma$ is constant.  It follows that $J:=I\cap B$ is nonzero and since $I$ is $\sigma$-prime, $J$ is a $\sigma$-prime ideal of $B$.  Since $J$ is $\sigma$-prime, we have $J=Q\cap \sigma(Q)\cap \cdots \cap \sigma^{n-1}(Q)$ for some nonzero prime ideal $Q$ of $B$ with $\sigma^n(Q)=Q$.  If $Q$ has height one then $Q$ is principal since $B$ is a UFD and hence $J$ is principal.  But this cannot occur by Lemma \ref{lem:unit}.  Thus $Q$ has height at least $2$ and since $B$ has Krull dimension $2$, we see that $Q$ is maximal and so $J$ is of finite codimension in $B$ as required.  Now we claim that $I=JA$. To see this, observe that $QA$ is a height two prime ideal of $A$ and if $I$ is not $JA$ then there must be a $\sigma$-periodic prime ideal $P$ strictly containing $QA$ such that $I$ is the intersection of the $\sigma$-orbit of $P$.  But if $P$ properly contains $QA$ then it must have height at least three and hence must be maximal and so $u-\gamma\in P$ for some nonzero $\gamma$ by the Nullstellensatz.  But $\sigma^n(u-\gamma)=2^n u-\gamma$ and since $\sigma^n(Q)=Q$ for some $n\ge 1$, we then get that 
$(2^n-1)\gamma\in Q$, a contradiction.  
\end{proof}
\begin{proof}[Proof of Example \ref{exam:lor}]
Let $S=A[z^{\pm 1};\sigma]$.  Then we claim that $A$ does not satisfy the Dixmier-Moeglin equivalence and ${\rm Spec}(S)$ and ${\rm Spec}(k[t])$ are homeomorphic.  To see this, first notice that if $\alpha$ and $\beta$ are roots of unity, then the point
 $(\alpha,\beta)$ has finite orbit under $\sigma$ and hence the intersection of the prime ideals in the $\sigma$-orbit of $(x-\alpha,y-\beta)$ is a $\sigma$-prime ideal of $A$.  In particular, since there are infinitely many ordered pairs of roots of unity, we see that there is an infinite set of nonzero $\sigma$-prime ideals of $A$ and by Lemma \ref{lem:A} they are all maximal $\sigma$-prime ideals of $A$.  Notice from the above that if $I$ is a nonzero $\sigma$-prime ideal of $A$ then $A/I\cong (B/Q)[u^{\pm 1}]$ and since $\sigma$ has infinite order on $(B/Q)[u^{\pm 1}]$ and $I$ is a maximal $\sigma$-prime ideal of $A$ we see that $(A/I)[z^{\pm 1};\sigma]$ is simple.  Thus we have shown that $(0)$ is not locally closed and all nonzero prime ideals of $S$ are maximal.  Moreover, since $S$ is noetherian we have that any infinite set of maximal ideals is Zariski dense.  Thus ${\rm Spec}(S)$ is homeomorphic to ${\rm Spec}(k[t])$.  Thus to finish the proof it suffices to show that $(0)$ is rational.  Notice this occurs, if and only if there is a non-constant rational map $f: {\rm Spec}(A)\dashrightarrow \mathbb{P}^1$ such that $f\circ \sigma = f$ \cite[Lemma 3.5]{BG18}.  Notice that we can write 
 $f$ as $P(x,y,u)/Q(x,y,u)$ where $P,Q\in A$ are coprime.  Then $f\circ \sigma=f$ gives that 
 $P(x^5y^4,xy,2u)/Q(x^5y^4,xy,2u) = P(x,y,u)/Q(x,y,u)$ and so 
 $$P(x^5y^4,xy,2u) Q(x,y,u) = P(x,y,u)Q(x^5y^4,xy,2u).$$  Then since $P$ and $Q$ are coprime and $A$ is a UFD, we have
 $(P(x^5y^4,xy,2u))=(P(x,y,u))$ and $(Q(x^5y^4,xy,2u))=(Q(x,y,u))$.  By considering the degree in $u$ and using the fact that $A^* = k^*\langle x^{\pm 1},y^{\pm 1},u^{\pm 1}\rangle$, we see that
 $\sigma(P) = \gamma x^p y^q P$ and $\sigma(Q) = \delta x^s y^t Q$ with $\gamma,\delta\in k^*$ and $p,q,s,t\in \mathbb{Z}$. Now we consider $P$ as a polynomial in $u$ and write
 $P=\sum P_i u^i$.  Then $\sigma(P) = \gamma x^p y^q P$ gives $\sigma(P_i) 2^i = \gamma x^p y^q P_i$ and so each $P_i$ is a $\sigma$-invariant prime ideal of $P$ and hence $P_i$ is either $0$ or a unit in $B$ by Lemma \ref{lem:unit}.
But the units of $B$ are of the form $k^*  x^p y^q$ and it is straightforward to check that there are no unit solutions to 
  $\sigma(P_i) 2^i = \gamma x^p y^q P_i$ if $i>0$.  Hence $P=P_0\in B$ and similarly $Q=Q_0\in B$ and from the above remarks we have that $P_0$ and $Q_0$ are units in $B$ and so $f$ is a unit of $B$.  Thus $f= \gamma x^p y^q$ for some nonzero $\gamma$ and some integers $p$ and $q$.  But now $f\circ\sigma = \gamma x^{5p+q}y^{4p+q} = \gamma x^p y^q$ and so $p=q=0$ and so we see $f$ is necessarily constant and thus $P$ is rational.  Thus $S$ does not satisfy the Dixmier-Moeglin equivalence as claimed.  
 \end{proof}
 \section{Morita Equivalence and corners}
In this section we prove that satisfying the Dixmier-Moeglin equivalence is a Morita invariant. Large parts of this result were already well known and the result itself is not too difficult, but since it doesn't appear in the literature and since there are non-trivial consequences of this result, we find it useful to record this fact. We first make the following well known remark, which shows that Morita equivalence is a much stronger condition than assuming homeomorphic spectra.

\begin{remark}\label{20} Let $R$ and $S$ be Morita equivalent algebras. Then there is a homeomorphism $\Psi: {\rm Spec}(R) \to {\rm Spec}(S)$ with the property that $\Psi(P)$ is primitive if and only if $P$ is primitive.
\end{remark}
\begin{proof} This is essentially given by Theorem 3.5.9 (i) of McConnell and Robson \cite{McR}. Let ${}_SM_R$ be a progenerator. Then the map $\Psi$ from ideals of $R$ to ideals of $S$ given by $I\mapsto MIM^*$ gives a semigroup isomorphism between the ideals of $R$ and those of $S$ that induces a bijection from ${\rm Spec}(R)$ to ${\rm Spec}(S)$ that preserves primitivity \cite[Theorem 3.5.9]{McR}. We note that $\Psi:{\rm Spec}(R)\to{\rm Spec}(S)$ and $\Psi^{-1} :{\rm Spec}(S)\to {\rm Spec}(R)$ are continuous since the collection of prime ideals in $R$ containing $I$ is mapped to the collection of prime ideals in $S$ that contain $\Psi(I)$ and conversely 
$$\Psi^{-1}(\{P\in {\rm Spec}(S)\colon P\supseteq J\}) = \{Q\in {\rm Spec}(R)\colon Q\supseteq \Psi^{-1}(J)\}.$$
\end{proof}

We now show that satisfying the Dixmier-Moeglin equivalence is a Morita invariant.  
\begin{proof}[Proof of Theorem \ref{thm:main3}(a)]
Let ${}_SM_R$ be a progenerator and let $\Psi :{\rm Spec}(R)\to {\rm Spec}(S)$ be the homeomorphism described in Remark \ref{20}.  Let $P$ be a prime ideal of $R$.  Then since being locally closed is a topological property we see that $P$ is locally closed in ${\rm Spec}(R)$ if and only if $\Psi(P)$ is locally closed in ${\rm Spec}(S)$.  By Remark \ref{20} we have that $P$ is primitive if and only if $\Psi(P)$ is primitive.  Finally to see $P$ is rational if and only if $\Psi(P)$ is rational, suppose that $P$ is a rational prime ideal of $R$.  Then by McConnell and Robson \cite[Theorem 3.5.9 (ii)]{McR} we have $R/P$ and $S/\Psi(P)$ are Morita equivalent and hence $Q(R/P)$ and $Q(S/\Psi(P))$ are Morita equivalent \cite[Proposition 3.6.9]{McR}. Thus there is a $k$-algebra isomorphism $Z(Q(R/P))\cong Z(Q(S/\Psi(P))$ (cf. \cite[Theorem 3.59(iii)]{McR}), and so $P$ is rational if and only if $\Psi(P)$ is rational. 
\end{proof}
We note that if $R$ is a ring and $e$ is an idempotent of $R$ then $eRe$ is not in general Morita equivalent to $R$; one typically requires that $e$ be full; i.e., that $ReR=R$.  So in general the Dixmier-Moeglin equivalence being satisfied by $R$ is not equivalent to being satisfied by $eRe$.  For example, if $R$ is a ring that does satisfy the Dixmier-Moeglin equivalence and $S$ does then $T=R\times S$ does not, but $R$ is of the from $eTe$ for an idempotent of $T$.  On the other hand, we are able to show that if $R$ satisfies the Dixmier-Moeglin equivalence then $eRe$ must too.  To do this, we begin with an easy remark, which is folklore, although we are unable to find a reference so we give a proof.
\begin{remark} Let $R$ be a ring with nonzero idempotent $e$.  Let $U$ denote the open subset of ${\rm Spec}(R)$ consisting of prime ideals for which $e\not\in P$.  Then there is a continuous surjection from $U$ (endowed with the subspace topology) to ${\rm Spec}(eRe)$ given by $P\in U\mapsto ePe$.
\label{rem:spec}
\end{remark}
\begin{proof} If $P\in {\rm Spec}(R)$ and $e\not\in P$ then $ePe$ is a proper ideal of $eRe$ and the fact that $P$ is prime in $R$ immediately gives that $ePe$ is prime in $eRe$.  Now to get surjectivity of the map let $Q\in {\rm Spec}(eRe)$ and let $P$ denote the sum of all ideals $I$ such that $eIe\subseteq Q$. Then this is a non-empty sum since $RQR$ has the property that $eRQRe=Q$.  Moreover, $P$ is a proper ideal and $ePe=Q$, since $ePe\subseteq Q$ by construction and $P\supseteq RQR$ so  $ePe\supseteq Q$.  Finally, to check that $P$ is prime, we note that if $J_1$ and $J_2$ are ideals of $R$ with $J_1J_2\subseteq P$.  Then $eJ_1J_2e\subseteq Q$ and so $(eJ_1e)(eJ_2e)\subseteq Q$.  But since $Q$ is prime, we then have either $eJ_1e\subseteq P$ or $eJ_2e\subseteq P$ and so either $J_1$ or $J_2$ is contained in $P$ by definition and so $P$ is prime.  Finally, to see continuity observe that the preimage of the set of prime ideals of $eRe$ that contains an ideal $I$ is precisely the prime ideals of $U$ that contain $RIR$.  
\end{proof}

\begin{proof}[Proof of Theorem \ref{thm:main3}(b)] Let $Q$ be a prime ideal of $eRe$ and let $P$ be a prime ideal of $R$ such that $eRe=P$.  Then we replace $R$ by $R/P$ and assume that $R$ and $eRe$ are prime and that $P$ and $Q$ are zero. First, we have that $(0)$ is a primitive ideal of $R$ if and only if $(0)$ is a primitive ideal of $eRe$ by a result of Lanski, Resco, and Small \cite[Theorem 1]{LRS79}.  Next observe that if $(0)$ is locally closed in ${\rm Spec}(R)$ then the intersection of the nonzero prime ideals of $R$ is a nonzero ideal $I$.  Since every nonzero prime ideal of $eRe$ is of the form $ePe$ for some nonzero prime ideal $P$ of $R$ we see that the intersection of the nonzero prime ideals of $eRe$ contains $eIe$, which is nonzero since $R$ is prime.  Thus $(0)$ is locally closed in ${\rm Spec}(eRe)$ if $(0)$ is locally closed in ${\rm Spec}(R)$.  Conversely, if $(0)$ is not locally closed in ${\rm Spec}(R)$ then $$\bigcap P = (0),$$ where we take the intersection of all nonzero prime ideals $P$ of $R$. 
But since $ePe$ is either a nonzero prime ideal of $eRe$ or is equal to $eRe$ we see that the intersection of the nonzero prime ideals of $eRe$ is contained in the intersection $\bigcap_{P\neq 0} ePe = e(\bigcap_{P\neq 0} P)e=(0)$ and so $(0)$ is not locally closed in ${\rm Spec}(eRe)$.  Finally, $(0)$ is a rational prime of $R$ if and only if the centre of $Q(R)$ is an algebraic extension of $k$.  But now $e$ becomes a full idempotent in $Q(R)$ since the ideal $ReR$ contains a regular element and hence $Q(R)$ and $eQ(R)e$ are Morita equivalent and since $Q(R)$ is prime Artinian, so is $eQ(R)e$ \cite[Proposition 3.5.10]{McR}.  In particular, since $Q(eRe)$ is a localization of $eQ(R)e$ we see that $Q(eRe)=eQ(R)e$ and so $(0)$ is rational in $R$ if and only if $(0)$ is rational in $eRe$ (cf. \cite[Theorem 3.59(iii)]{McR}).  
Finally since every prime ideal of $eRe$ is of the form $ePe$ for some prime ideal of $R$ we see that if $R$ satisfies the Dixmier-Moeglin equivalence then so does $eRe$.
\end{proof}
\begin{remark} As noted earlier, the converse of this result does not hold: if $R$ and $S$ are algebras and $R$ satisfies the Dixmier-Moeglin equivalence and $S$ does not, then $T:=R\times S$ does not satisfy the Dixmier-Moeglin equivalence since $S$ is a homomorphic image.  On the other hand $R=eTe$ with $e=(1,0)$.  The proof in the other direction fails because the argument gives no information about primes $P\in {\rm Spec}(R)$ with $e\in P$.  Thus the converse only holds in general if we know $e$ is full; i.e., $R=ReR$.  But in this case $R$ and $eRe$ are Morita equivalent and the result is already covered by Theorem \ref{thm:main3}(a).
\end{remark}
We now give an application of this result.  We recall that if $A$ is a $k$-algebra and $H$ is a Hopf $k$-algebra, then $H$ \emph{acts on} $A$ if there is there is a $k$-bilinear map $\phi:H\times A\to A$ (where we let $h\cdot a$ denote $\phi(h,a)$) such that for $a,b\in A$ we have $h\cdot (ab) = \sum_i (f_i\cdot a)(g_i\cdot b),$ where $\Delta(h)=\sum f_i \otimes g_i$, and we have $h\cdot 1=\epsilon(h)$.  Given a Hopf algebra $H$ acting on $A$, we can then construct the invariant subalgebra $A^H = \{a\in A\colon h\cdot a=\epsilon(h)a~{\rm for~all}~h\in H\}$.
\begin{corollary}\label{22} Let $k$ be a field, let $A$ be a left noetherian $k$-algebra of finite Gelfand-Kirillov dimension, and let $H$ be a finite-dimensional semisimple Hopf algebra that acts on $A$. If $A$ satisfies the Dixmier-Moeglin equivalence, then the invariant subalgebra $A^H$ satisfies the Dixmier-Moeglin equivalence.
\end{corollary}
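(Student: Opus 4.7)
\noindent\emph{Proof proposal.} The plan is to realize $A^H$ as a corner of the smash product $R := A\#H$, prove that $R$ itself satisfies the Dixmier--Moeglin equivalence, and then invoke Theorem~\ref{thm:main3}(b). First, since $H$ is finite-dimensional and semisimple, Maschke's theorem provides a two-sided integral $\Lambda\in H$ normalized so that $\epsilon(\Lambda)=1$; then $\Lambda^2=\epsilon(\Lambda)\Lambda=\Lambda$, so $e:=1\#\Lambda$ is a nonzero idempotent of $R$, and the classical Cohen--Fishman--Montgomery isomorphism $A^H \cong eRe$ sending $a\mapsto ae=ea$ identifies $A^H$ with a corner of $R$. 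Moreover, $R$ is free of rank $\dim_k H$ as a left $A$-module, so $R$ is left noetherian and has the same (finite) Gelfand--Kirillov dimension as $A$; consequently $R$ inherits the Nullstellensatz from $A$.

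The main step is to show that $R=A\#H$ satisfies the Dixmier--Moeglin equivalence. Since $R$ satisfies the Nullstellensatz, the implications (locally closed) $\Rightarrow$ (primitive) $\Rightarrow$ (rational) are automatic for primes of $R$, so the task reduces to showing that every rational prime of $R$ is locally closed. For this I would exploit the fact that $R$ is a finite extension of the $H$-module algebra $A$: for any $P\in\Spec(R)$, a Lorenz--Passman-style incomparability argument adapted to semisimple Hopf actions shows that $P\cap A$ is an $H$-stable semiprime ideal of $A$ of the form $Q_1\cap\cdots\cap Q_n$, where the $Q_i$ are primes of $A$ forming a single finite $H$-orbit. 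Using the Reynolds projector $\mathrm{id}\#\Lambda:R\to A^H$, one verifies that $P$ is rational in $R$ if and only if each $Q_i$ is rational in $A$, and that the primes of $R$ properly containing $P$ correspond (via intersection with $A$) to the $H$-stable semiprime ideals of $A$ properly containing $P\cap A$, equivalently to the primes of $A$ properly containing some $Q_i$. Since $A$ satisfies the Dixmier--Moeglin equivalence, each rational $Q_i$ is locally closed in $\Spec(A)$, and the correspondence then translates this to local closedness of $P$ in $\Spec(R)$.

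With $R$ now known to satisfy the Dixmier--Moeglin equivalence, Theorem~\ref{thm:main3}(b) applied to the nonzero idempotent $e\in R$ immediately yields that $eRe\cong A^H$ satisfies the Dixmier--Moeglin equivalence. The main obstacle is the middle step: establishing the precise dictionary between primes of $R$ and finite $H$-orbits of primes of $A$, and verifying that each of the three conditions (locally closed, primitive, rational) transfers correctly under this dictionary. The semisimplicity of $H$ is used essentially throughout, both to supply the integral (hence the idempotent $e$ and the Reynolds retraction onto $A^H$) and to rule out pathological primes $P$ of $R$ with $P\cap A=(0)$ but $P\neq(0)$ having infinitely many primes above them.
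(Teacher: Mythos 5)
Your overall architecture coincides with the paper's: form $R=A\#H$, use the normalized integral $\Lambda$ (semisimplicity) to get the idempotent $e=1\#\Lambda$ with $eRe\cong A^H$ (this is \cite[Lemma 4.3.4]{mont}, which the paper invokes via surjectivity of the trace map), and then apply Theorem \ref{thm:main3}(b). That part of your proposal is fine. The difference, and the problem, is the middle step: the paper does \emph{not} prove directly that $A\#H$ satisfies the Dixmier--Moeglin equivalence; it cites Letzter \cite{Let89}, using that $A\subseteq A\#H$ is a finite (free) extension of noetherian algebras, so that the equivalence transfers from $A$ to $A\#H$. In your write-up this transfer is exactly the content you wave at with ``a Lorenz--Passman-style incomparability argument adapted to semisimple Hopf actions shows \dots'' and ``one verifies that \dots,'' and as stated it does not hold up. Since $H$ is not a group, there is no action of $H$ on $\Spec(A)$ and hence no ``finite $H$-orbit'' of primes; the correct language is $H$-stable (semi)prime ideals, and the contraction $P\mapsto P\cap A$ from $\Spec(A\#H)$ to $H$-stable semiprimes of $A$ is neither injective nor does it induce the clean containment-preserving dictionary you assert between primes above $P$ and primes of $A$ above the minimal primes of $P\cap A$. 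Moreover, the two assertions you list as verifications --- that rationality passes between $P$ and the minimal primes over $P\cap A$, and that local closedness transfers across the extension --- are precisely the nontrivial theorems about finite extensions (Letzter's results and their refinements); they are not consequences of the idempotent $1\#\Lambda$ or of a Reynolds projection. It is telling that your argument never genuinely uses the finite Gelfand--Kirillov dimension hypothesis, which is present in the statement exactly because such transfer results for finite extensions require it; a proof that ignores it should be viewed with suspicion.

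Two smaller points. First, your claim that $R$ ``inherits the Nullstellensatz from $A$'' is unsupported: $A$ is not assumed to satisfy the Nullstellensatz, and finite GK dimension alone does not yield it; in the paper's route this issue never arises, because Letzter's theorem transfers all three properties from $A$, where the full equivalence is assumed. Second, your concluding application of Theorem \ref{thm:main3}(b) is exactly as in the paper and is correct once the middle step is secured. So the repair is simple: replace your sketched prime dictionary by the citation to \cite{Let89} (finite extension $A\subseteq A\#H$, both noetherian, finite GK dimension), after which your argument becomes the paper's proof.
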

\begin{proof} Since $H$ is finite-dimensional, the smash-product algebra $A\#H$ is a finite free left and right $A$-module and hence left noetherian and $A\#H$ satisfies the Dixmier-Moeglin equivalence by \cite{Let89}. Now since $H$ is semisimple, the trace map from $A$ to $A^H$ is surjective and so \cite[Lemma 4.3.4]{mont} gives that $e(A\#H)e\cong A^H$ for some nonzero idempotent $e\in A\#H$. Theorem \ref{thm:main3}(b) now gives that $A^H$ satisfies the Dixmier-Moeglin equivalence.
\end{proof}
\begin{remark} We observe that if one follows the proof then we see that the semisimple hypothesis can be replaced by the weaker condition that the trace map $t:A\to A^H$ being surjective in Corollary \ref{22}.
\end{remark}

\section{Dixmier-Moeglin equivalence and Tensor Products}

In this section, we prove that if $R$ and $S$ are algebras satisfying the Dixmier-Moeglin equivalence then $R\otimes_k S$ does too under a hypothesis on the base field.  We note that there are some subtleties that arise in general since a tensor product of prime rings need not be prime, and so we find obtaining the result without some sort of hypothesis that gives the Nullstellensatz to be difficult.
\begin{proof}[Proof of Theorem \ref{thm:main3}(c)] 
Since $R\otimes_k S$ satisfies the Nullstellensatz, it suffices to prove that rational prime ideals are locally closed. 

Let $P\in {\rm Spec}(R\otimes_k S)$.  Let $Q=\{a\in R \colon a\otimes 1\in P\}$.  Then $Q$ is a prime ideal of $R$ since if 
$a_1,a_2\in R$ are such that $a_1Ra_2\subseteq Q$ then $(a_1\otimes 1)(R\otimes S)(a_2\otimes 1)\subseteq Q\otimes S\subseteq P$ and so $a_1\otimes 1$ or $a_2\otimes 1\in P$, which then gives $a_1$ or $a_2$ is in $Q$.

Then $(R\otimes_k S)/P$ is a homomorphic image of $(R/Q)\otimes S$ and so without loss of generality we may replace $R$ by $R/Q$ and assume that $R$ is prime and that $(R\otimes 1)\cap P=(0)$.  Similarly, we may assume that $S$ is prime and that $(1\otimes S)\cap P=(0)$.  Notice that if $a\otimes b\in P$ with $a\in R$ and $b\in S$ then for $x\in R$ and $y\in S$ we have $ax\otimes yb= (1\otimes y)(a\otimes b)(x\otimes 1)\in P$.  Hence $(a\otimes 1)(R\otimes S)(1\otimes b)\subseteq P$ and thus by the above reduction we have either $a=0$ or $b=0$.  

Now suppose that $P$ is rational.  We show that $P$ is locally closed.  To see this, let $U$ denote the regular elements of $R$ and let $T$ denote the regular elements of $S$.  Then $(U\otimes T)$ is an Ore set of elements of $R\otimes S$ and by the above remarks the prime $P$ survives in the localization $(U\otimes T)^{-1} R\otimes S$. In particular $Q(R)\otimes_k Q(S)$ is a subalgebra of $Q((R\otimes_k S)/P)$ and $Z(Q(R))\otimes_k Z(Q(S))$ is a subalgebra of $Q((R\otimes_k S)/P)$.

In particular, since $P$ is rational, $Z(Q(R))\otimes_k Z(Q(S))$ is an algebraic extension of $k$ and thus $Z(Q(R))$ and $Z(Q(S))$ are algebraic extensions of $k$ and so $(0)$ is a rational prime of $R$ and $(0)$ is a rational prime of $S$.  By hypothesis, we then have that $(0)$ is a locally closed ideal of $R$ and $(0)$ is a locally closed ideal of $S$. 

To see that $P$ is locally closed, we let $X$ denote the set of primes in ${\rm Spec}(R\otimes_k S)$ that properly contain $P$.  We let $X_1$ denote the subset of primes $Q\in X$ such that $Q\cap (R\otimes 1)\neq (0)$ and we let $X_2$ denote the set of primes $Q\in X$ such that $Q\cap (1\otimes S)\neq (0)$.  Arguing as above, we see that for $Q\in X_1$, $Q\cap (R\otimes 1)$ is a nonzero prime ideal of $R$.  Since $(0)$ is locally closed, we then have that there is some nonzero $a\in R$ such that $a\otimes 1 \in \cap_{Q\in X_1} Q$.  Similarly, there is some nonzero $b\in S$ such that $1\otimes b\in \cap_{Q\in X_2} Q$.  

Finally, consider the primes $Q$ such that $Q\cap (R\otimes 1)=(0)$ and $Q\cap (1\otimes S)=(0)$.  Then as before, these primes have trivial intersection with the Ore set $U\otimes T$, and so the primes in $X\setminus (X_1\cup X_2)$ survive in the localization $Q(R)\otimes_k Q(S)/\widetilde{P}$, where $\widetilde{P}= (U\otimes T)^{-1} P$.  We claim that $\widetilde{P}$ is maximal.  To see this, let $I$ be an ideal of $Q(R)\otimes_k Q(S)$ that properly contains $\widetilde{P}$.  Then we pick $x=\sum_{i=1}^d a_i\otimes b_i \in I\setminus \widetilde{P}$ with $a_1,\ldots ,a_d,b_1,\ldots, b_d$ nonzero and $d$ minimal.  Then $d>1$ since $a_1\otimes b_1$ is a unit in $Q(R)\otimes_k Q(S)$.  

Then we may right-multiply by $a_1^{-1}\otimes b_1^{-1}$ and assume that $a_1=b_1=1$.  Then 
for $c\in Q(R)$ we have $[x,c\otimes 1]=\sum_{i=2}^d [a_2,c]\otimes b_2\in I$ and so by minimality of $d$ we see that it is in $\widetilde{P}$; similarly, $[x,1\otimes c']\in \widetilde{P}$ for $c'\in Q(S)$.  Thus $x$ is central mod $\widetilde{P}$.  But
$$Z(Q(R)\otimes Q(S)/\widetilde{P})\subseteq   Z(Q(Q(R)\otimes_k Q(S)/\widetilde{P})) = Z(Q(R\otimes S)/P),$$ which is an algebraic extension of $k$ since $P$ is rational.  Hence $Z(Q(R)\otimes_k Q(S)/\widetilde{P})$ is a finite-dimensional $k$-algebra that is a domain and hence it is a field.  Thus $x$ is a unit, which gives that $I=Q(R)\otimes_k Q(S)$ and so we obtain the desired result.

Thus $X=X_1\cup X_2$ and so we see that $0\neq a\otimes b\in \bigcap_{Q\in X} Q$ and so $P$ is locally closed.  Thus we have obtained the result when ${\rm dim}_k(R),{\rm dim}_k(S)<|k|$.  
\end{proof}
It would be interesting to remove the hypothesis that $R\otimes_k S$ satisfies the Nullstellensatz in the proof of Theorem \ref{thm:main3}(c). We note that a tensor product of noetherian algebras need not be noetherian; for example, if $R=S=k(t_1,t_2,\ldots )$ then $R$ and $S$ are noetherian but $R\otimes_k S$ is not.
 
\begin{remark} Throughout we have taken rationality of a prime ideal $P$ of a left noetherian algebra $R$ to mean that $Z(Q(R/P))$ is an algebraic extension of the base field.  If one instead takes rationality to mean that $Z(Q(R/P))$ is a finite extension of the base field then all results go through, essentially verbatim.
\end{remark}

\end{document}